\documentclass{article}

\PassOptionsToPackage{numbers, compress}{natbib}
\usepackage{natbib}
\usepackage{PRIMEarxiv}

\usepackage[utf8]{inputenc} % allow utf-8 input
\usepackage[T1]{fontenc}    % use 8-bit T1 fonts
\usepackage{hyperref}       % hyperlinks
\hypersetup{hidelinks}
\usepackage{url}            % simple URL typesetting
\usepackage{booktabs}       % professional-quality tables
\usepackage{amsfonts}       % blackboard math symbols
\usepackage{nicefrac}       % compact symbols for 1/2, etc.
\usepackage{microtype}      % microtypography
\usepackage{xcolor}         % colors

\usepackage{amsthm}
\usepackage{amssymb}
\usepackage{amsmath}
\newtheorem{thm}{Theorem}
\newtheorem{prop}[thm]{Proposition}

\newtheorem{rem}[thm]{Remark}

\usepackage{caption}
\usepackage{subcaption}
\renewcommand{\tilde}{\widetilde}
\DeclareMathOperator{\wce}{wce}
\usepackage{bbm}
\newcommand{\bm}[1]{{\mbox{\boldmath $#1$}}}
\DeclareMathOperator{\dist}{dist}
\DeclareMathOperator{\cv}{conv}
\newcommand{\p}[1]{\mathbb{P}\!\left(#1\right)}
\renewcommand{\P}[1]{\mathbb{P}\!\left(#1\right)}

\usepackage{comment}
\newcommand{\Hil}{\mathcal{H}}
\newcommand{\E}[1]{\mathbb{E}\!\left[#1\right]}

\newcommand{\ord}[1]{\mathcal{O}\!\left(#1\right)}
\newcommand{\R}{\mathbb{R}}
\newcommand{\dd}{\mathrm{d}}
\newcommand{\ve}{\varepsilon}
\newcommand{\X}{\mathcal{X}}
\newcommand{\ip}[1]{\left\langle #1 \right\rangle}
\newcommand{\nor}[1]{\left\lVert #1 \right\rVert}

\renewcommand{\paragraph}[1]{
    \noindent\textbf{#1}\ 
}

\usepackage{algorithm}
\usepackage{algpseudocode}
\usepackage{graphicx}

\title{Convex-Geometric Error Bounds for Positive-Weight Kernel Quadrature}
\newcommand{\RUNTITLE}{
    Convex-Geometric Error Bounds for Positive-Weight Kernel Quadrature
}
\author{%
  Satoshi Hayakawa \\
  The University of Tokyo \\
  \texttt{hayakawa@mist.i.u-tokyo.ac.jp} \\
}

\begin{document}
\maketitle

\begin{abstract}
Kernel quadrature can exploit RKHS spectral structure and outperform Monte
Carlo on smooth integrands, but optimized quadrature weights are generally
signed and may be numerically unstable. We study whether spectral acceleration
remains possible when the weights are constrained to be positive, i.e., simplex
weights. In the exact-target fixed-pool setting, an evaluated i.i.d. candidate
pool of size $N$ is already available and the task is to reweight it so as to
approximate the kernel mean embedding. We show that this positive reweighting
problem is governed not by the equal-weight empirical average, but by the random
convex hull generated by the pool. Our main geometric result shows that the mean
of a bounded $d$-dimensional random vector can be approximated by a convex
combination of $N$ i.i.d. samples at accuracy $\mathcal{O}(d/N)$ with high probability,
sharper than equal-weight averaging in the fixed-dimensional regime. We transfer
this $d$-dimensional convex-hull approximation to full RKHS worst-case error
through an augmented Mercer-truncation argument. The resulting positive-weight
KQ bounds consist of a spectral tail term and a finite-sample convex-hull term,
yielding Monte-Carlo-beating rates in favorable spectral regimes, including
near-$\mathcal{O}(1/N)$ rates up to logarithmic factors under exponential spectral
decay. We also provide a constructive Frank--Wolfe algorithm that operates
directly on the pool atoms, maintains simplex weights, and admits an explicit
optimization-error bound.
\end{abstract}

\section{Introduction}

Kernel quadrature (KQ) is a kernel-based approach to numerical integration,
closely related to Bayesian quadrature (BQ) and probabilistic
integration~\citep{oha91,ras03,bri19}. For sufficiently regular integrands,
KQ can exploit spectral structure in a reproducing kernel Hilbert space (RKHS) that is invisible to plain Monte
Carlo and thereby converge faster than the usual
$\mathcal{O}(N^{-1/2})$ rate in the number of points~\citep{bac17,kan20}.
Unconstrained kernel-based rules, however, may produce numerically unstable
weights, motivating longstanding interest in positively weighted
rules~\citep{che10,hayakawa21b,KKS19,van20}. In this paper, positive weights
mean nonnegative weights that sum to one, i.e., simplex or
convex-combination weights.

Whether positive-weight KQ can systematically improve over Monte Carlo is a
subtle question. Kernel herding and related constructions suggested fast
rates under favorable assumptions~\citep{che10}, but the conditional-gradient
viewpoint of \citet{bac12} clarified that the strongest such assumptions are
not generally available in infinite-dimensional RKHSs. Subsequent
herding-type analyses in broad RKHS settings have therefore mostly remained
at the Monte-Carlo scale, except under additional structure or modified
algorithms such as sparse herding variants~\citep{lacostejulien15,tsu22,tsu26}.
Beyond herding, subsampling-based positive KQ methods such as
thinning~\citep{dwi24,dwi22} and recombination~\citep{hayakawa21b,hayakawa2023sampling}
have obtained rates beyond Monte Carlo, but a general mechanism for such
improvement in the simple i.i.d. fixed-pool setting has remained unclear.

By contrast, without the positivity constraint, sharper spectral rates are
known for optimized KQ weights. Random-feature and Nystr\"om viewpoints,
including the i.i.d. and leverage-score analyses of \citet{bac17} and
\citet{chatalic25}, give unrestricted-weight rates controlled by spectral
decay or effective dimension. Our question is whether a comparable
phenomenon survives under the positive-weight constraint. We study this in
an exact-target fixed-pool setting: an evaluated i.i.d. candidate pool is
already available, and the task is to reweight it using positive weights.
Without this constraint, the same fixed-node kernel mean approximation
problem underlies BQ, where posterior variance is minimized over
unrestricted weights~\citep{hus12,bri15}. Our focus is the positive-weight
counterpart, namely approximation of the kernel mean embedding by a point in
the convex hull of $\{k(x_i,\cdot)\}_{i=1}^N$. We assume that the target
kernel mean can be evaluated on the pool points, as in empirical targets or
analytically tractable kernels.

Our main observation is that positive reweighting is governed not by the
equal-weight empirical average, but by the random convex hull generated by
the pool. Building on relaxed Tukey-depth/random-convex-hull machinery
\citep{hayakawa21a}, rooted in Tukey or halfspace depth~\citep{tuk75,rou99},
we show that the mean of a bounded $d$-dimensional random vector can be
approximated by a convex combination of $N$ i.i.d. samples at accuracy
$\ord{d/N}$ with high probability. We then transfer this finite-dimensional
convex-hull approximation to KQ using a residual-diagonal augmentation of a
truncated Mercer representation, following the spirit of earlier
positive-weight KQ analyses~\citep{hayakawa21b}. This yields full
worst-case-error bounds governed by a spectral tail term and a finite-sample
convex-hull term. In exponentially decaying spectral regimes, the resulting
positive-weight rates are near-$N^{-1}$ up to logarithmic factors,
comparable in scale to unrestricted least-squares/Nystr\"om rates but under
the additional positive-weight constraint. We also give a constructive
counterpart: Frank--Wolfe on the convex hull of the pool atoms approximately
solves the positive-weight optimum on the fixed pool and admits an explicit
optimization-error bound~\citep{jaggi13,bac12}. The Mercer expansion and
truncation level are used only in the analysis; the algorithm itself only
needs the evaluated pool and the target kernel mean on the pool.

\paragraph{Contributions.}
Our main contributions are as follows.
\begin{enumerate}
    \item We prove a convex-geometric, high-probability approximation result
    for bounded random vectors: the population mean can be approximated
    by a convex combination of $N$ i.i.d. samples at accuracy $\ord{d/N}$
    in dimension $d$ (Section~\ref{sec:convex-hull}).

    \item We transfer this random convex-hull approximation result to
    positive-weight kernel quadrature in Section~\ref{sec:kq-existence},
    via an augmented truncation of the Mercer expansion, yielding full RKHS
    worst-case error bounds of the form
    $\wce(Q_N;\mathcal{H},\mu)
    =
    \mathcal{O}\bigl((\sum_{j>d}\sigma_j)^{1/2} + d/N\bigr)$,
    where $(\sigma_j)_{j\ge1}$ are the Mercer eigenvalues of the kernel.

    \item In Section~\ref{sec:kq-construction}, we give a constructive
    counterpart by running Frank--Wolfe directly in the RKHS on the convex
    hull of the pool atoms, obtaining an explicit optimization-error bound
    for a positive-weight fixed-pool algorithm.
\end{enumerate}

\section{Related Work}\label{sec:related-work}

\paragraph{Positive-weight KQ and measure reduction.}
Positive-weight KQ is closely tied to herding and conditional-gradient
methods. Kernel herding was introduced as a positive-weight construction
with the promise of fast deterministic integration~\citep{wel09,che10}, and
\citet{bac12} later identified its conditional-gradient structure and
explained why the strongest fast-rate assumptions are restrictive in general
infinite-dimensional RKHSs. Subsequent broad RKHS guarantees for
herding-type methods have largely remained at the Monte-Carlo scale, apart
from more specialized variants~\citep{lacostejulien15,tsu22,tsu26}. Other
positive-weight measure-reduction methods include support points~\citep{mak18},
kernel thinning and generalized kernel thinning~\citep{dwi24,dwi22},
positive-weight KQ via subsampling~\citep{hayakawa21b}, and positive rules
on arbitrary sample sets~\citep{van20}. These methods show that positivity
can be compatible with compression and, in some cases, rates beyond Monte
Carlo, but they typically rely on point selection, thinning, recombination,
or subsampling rather than positive reweighting of a fixed i.i.d. pool.

\paragraph{Unrestricted weights and Bayesian quadrature.}
A separate line of work obtains spectral acceleration with optimized weights
that are generally signed or otherwise unrestricted. This includes
randomized or optimized sampling for KQ~\citep{bac17,bri17}, Nystr\"om kernel
mean embeddings and Nystr\"om-based KQ~\citep{cha22,hayakawa2023sampling},
leverage-score sampling with optimized least-squares weights~\citep{chatalic25},
determinantal point processes and continuous volume sampling~\citep{belhadji2019kernel,belhadji2020kernel},
and randomly pivoted Cholesky~\citep{epperly2023kernel}. The results of
\citet{bac17} and \citet{chatalic25} are especially relevant as i.i.d. or
fixed-node analogues with rates expressed through eigenvalue decay or
effective dimension, but their optimized weights are not constrained to be
positive or to sum to one. BQ fits the same unrestricted-weight fixed-node
perspective once the nodes are fixed: posterior variance minimization is
kernel mean approximation over unrestricted weights~\citep{hus12,bri15},
while adaptive and batch BQ methods acquire nodes using posterior
uncertainty or recombination ideas~\citep{gunter2014sampling,kanagawa2019convergence,ada22}.
Our contribution is complementary: we do not improve these unrestricted
rates, but show that comparable spectral acceleration can survive under the positivity constraint; for instance, it reaches the near-$N^{-1}$ scale under exponential eigenvalue decay.

\paragraph{Random convex hulls and sparse convex approximation.}
The geometric input of the paper is closest to work on random convex hulls,
depth methods, and recombination-based cubature. We build on the relaxed
depth/random-convex-hull machinery of \citet{hayakawa21a}, rooted in Tukey
or halfspace depth~\citep{tuk75,rou99}. Related ideas appear in
recombination and cubature constructions~\citep{lit12,lyo04}, including
randomized multivariate cubature analyses based on random convex hulls and
hypercontractivity~\citep{hayakawa2023hypercontractivity}. Approximate
Carath\'eodory theorems are also relevant because they study sparse convex
combinations with explicit approximation guarantees~\citep{jaggi13,mirrokni17tight,combettes2023revisiting}.
Most random-convex-hull bounds, however, concern exact inclusion or
finite-dimensional approximation and do not by themselves yield RKHS
worst-case-error guarantees for KQ. We use random convex hulls as a
probabilistic approximation tool for the kernel mean itself: an $\ord{d/N}$
high-probability mean approximation bound is transferred to positive-weight
RKHS kernel mean approximation, giving fixed-pool KQ guarantees beyond the
finite-dimensional or problem-specific cubature settings where such
arguments had previously been used.

\section{Setup}\label{sec:setup}

\paragraph{Kernel quadrature and mean embedding.}
Let $\mu$ be a probability measure on $\mathcal{X}$, and let $k:\mathcal{X}\times\mathcal{X}\to\mathbb{R}$ be a positive-definite kernel with a real-valued reproducing kernel Hilbert space (RKHS) $\Hil$. Our goal is to approximate $I_\mu(f):=\int f(x)\,\dd\mu(x)$ from an evaluated pool $(x_i,f(x_i))_{i=1}^N$ with $x_i\sim_{\text{iid}}\mu$, under the assumption that further function evaluations are expensive or unavailable. We are interested in positive-weight rules, i.e.\ quadrature rules of the form $Q_N(f):=\sum_{i=1}^N w_i f(x_i)$ with weight vector $\bm{w}=(w_i)_{i=1}^N$ in the simplex $\Delta_N:=\{\bm{w}\in\R^N_{\ge0}:\sum_{i=1}^N w_i=1\}$.
Its evaluation criterion is the worst-case error over the unit ball of RKHS: $\wce(Q_N;\Hil, \mu):=
\sup_{\|f\|_\Hil\le 1}|I_\mu(f)-Q_N(f)|$.
Define the kernel mean embedding by $m_\mu:=\int k(x,\cdot)\,\dd\mu(x)\in\Hil$. By the reproducing property,
$ I_\mu(f)-Q_N(f)=\left\langle f,\,m_\mu-\sum_{i=1}^N w_i k(x_i,\cdot)\right\rangle_\Hil. $
Hence the worst-case error is
\begin{equation}
\wce(Q_N;\Hil, \mu)
:=
\sup_{\|f\|_\Hil\le 1}|I_\mu(f)-Q_N(f)|
=
\left\|m_\mu-\sum_{i=1}^N w_i k(x_i,\cdot)\right\|_\Hil,
\label{eq:wce-norm-form}
\end{equation}
so it reduces to approximating $m_\mu$ by a discrete measure supported on the pool $(x_i)_{i=1}^N$.

\paragraph{Spectral representation assumptions on the kernel.}
We assume that $k$ admits the Mercer expansion $k(x,y)=\sum_{j=1}^\infty \sigma_j e_j(x)e_j(y)$ with respect to $\mu$, where $(e_j)_{j\ge1}$ is an orthonormal set in $L^2(\mu)$ and $\sigma_1\ge \sigma_2\ge \cdots \ge 0$. Under classical continuity and compactness assumptions this is the usual Mercer theorem, and substantially more general measurable-domain versions are available as well~\citep{min06,ste12}. Throughout the main text we also assume the bounded-kernel regime $\sup_{x\in\mathcal X}\sqrt{k(x,x)}\le \kappa$ for some $\kappa>0$.
Our theoretical bounds in Section~\ref{sec:kq} are given in terms of the spectral decay $(\sigma_j)_{j\ge1}$ and the bounding constant $\kappa$.
Appendix~\ref{sec:bounded-reduction} gives a normalization viewpoint for extending the bounded-diagonal
assumption, though the resulting positive weights need not remain simplex weights.

\paragraph{What the algorithm needs.}
It is worth separating the information required by the analysis from the information required by the algorithm. The truncated Mercer expansion and truncation level $d$ enter the theory, where they are used to state and prove the error bounds in Section~\ref{sec:kq}, but they are \emph{not} inputs to the constructive procedure itself. Algorithmically, our methods (i.e., fixed-pool CQP/Frank--Wolfe) tested in Section~\ref{sec:experiments} only require an evaluated pool $x_1,\dots,x_N$, kernel evaluations on that pool (or the Gram matrix $(k(x_i, x_j))_{i,j=1}^N$), and access to the target kernel mean on the pool, namely $m_\mu(x_i)=\int k(x_i,x)\,\dd\mu(x)$ for $i=1,\dots,N$. In particular, the method does not require explicit knowledge of the Mercer eigenfunctions, eigenvalues, or a user-specified truncation level $d$.

\paragraph{Fixed-pool and exact-target setting.}
In the main text we study fixed-pool KQ: an evaluated pool $(x_i,f(x_i))_{i=1}^N$ is given, and the goal is to construct a stable positive-weight KQ supported on that pool. We further work in the exact-target setting as stated above, in which the target kernel mean embedding $m_\mu$ is available on the pool points. This includes, in particular, discrete target measures such as empirical measures supported on many unlabeled samples.
See also \eqref{eq:triangle-inequality} for a more practical extension.

\section{Random Convex-Hull Approximation of Mean Vectors}\label{sec:convex-hull}

We now state the probabilistic-geometric result that underlies our kernel quadrature application.
The key point is that the mean of a bounded random vector can be approximated by a convex combination of i.i.d.\ samples at accuracy $\ord{d/N}$ with high probability. This will later be transferred to positive kernel quadrature through an augmented Mercer representation.

\subsection{A one-sided probabilistic inequality}

Our starting point is the following one-sided inequality.

\begin{thm}\label{thm:one-sided}
Let $W$ be a centered random variable such that $\E{|W|^3}\le M\cdot\E{W^2}<\infty$ for some $M>0$. Then, for any positive integer $n$ and independent copies $W_1,\dots,W_n$ of $W$,
\begin{equation}
    \p{\frac1n\sum_{i=1}^n W_i \le \frac{2M}{n}} \ge \frac12.
    \label{eq:one-sided}
\end{equation}
In particular, the assumption is automatically satisfied if $|W|\le M$ almost surely.
\end{thm}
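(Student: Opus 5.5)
The event in \eqref{eq:one-sided} is $\{S\le 2M\}$ for $S:=\sum_{i=1}^n W_i$. The plan is to bound $\p{S>2M}$ using only low moments of $S$, which are computable from those of $W$. Write $\sigma^2:=\E{W^2}$. The case $\sigma=0$ is trivial, since then $S=0$ almost surely. Independence and centering give
\[
\E{S}=0,\qquad \E{S^2}=n\sigma^2,\qquad \E{S^3}=n\,\E{W^3}\le n\,\E{|W|^3}\le nM\sigma^2=M\,\E{S^2},
\]
since all mixed third-order terms vanish. The key point is that this third-moment condition holds for $S$ with the \emph{same} constant $M$, and not with a constant growing in $n$. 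This is why the threshold for the sum is $2M$ rather than something like $2M\sqrt n$. The final remark of the theorem is immediate: $|W|\le M$ gives $\E{|W|^3}\le M\,\E{W^2}$.

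The core step is a one-dimensional lemma. If $X$ is centered with $0<\E{X^2}<\infty$ and $\E{X^3}\le M\,\E{X^2}$, then $\p{X>2M}\le\frac12$. I would look for a test function $h$ with $h(x)\ge \mathbbm{1}_{\{x>2M\}}-\frac12$ pointwise, built from $1,x,x^2,x^3$, whose expectation is $\le 0$ under the moment constraints. The natural first candidate comes from
\[
\E{X^2(X-2M)}\le M\,\E{X^2}-2M\,\E{X^2}=-M\,\E{X^2}<0 .
\]
This says that the second-moment mass $X^2\,\dd\mathbb{P}$ lies "mostly" below $2M$. Combining it with $\E{X}=0$, I want to show that too much probability on $(2M,\infty)$ forces the positive part of $X$ to contribute too much to $\E{X^2(X-2M)}$. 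Concretely, I would split $\E{X^2(X-2M)}$ into the contributions from $\{X>2M\}$, $\{0\le X\le 2M\}$ and $\{X<0\}$. Then I would lower-bound the first via Jensen and $\E{X\mathbbm 1_{X>2M}}=\E{X^-}-\E{X\mathbbm 1_{0\le X\le 2M}}$, and control the negative part using Cauchy--Schwarz/Jensen on $\E{X^-}$ and $\E{(X^-)^2}$.

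The main obstacle is the negative side. A nonnegative cubic test function dominating an indicator cannot exist, because a cubic with positive leading coefficient tends to $-\infty$ at $-\infty$. So the contribution of $\{X<0\}$ cannot be handled by a single polynomial inequality. It must use the centering to link $\E{X^-}$ to $\E{X^+}$, and possibly the absolute third moment, not just $\E{X^3}$. If the pure lemma for $X$ turns out to be false or too weak with only $\E{S^3}\le M\E{S^2}$ available, my fallback is to exploit the sum structure directly. The bound $\E{|S|^3}$ is not computable, but one can symmetrize or condition. A second route is an induction on $n$ that adds one $W_i$ at a time and tracks the function $t\mapsto \p{S>t}$ for thresholds $t\ge 2M$. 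I would try the moment-splitting argument first and switch only if its constants fail to close at exactly $\frac12$.
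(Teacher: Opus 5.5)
\textbf{There is a genuine gap: the one-dimensional lemma at the core of your plan is false.} Take $\p{X=a}=p$ and $\p{X=-b}=1-p$ with $a>2M$, $1/2<p<1$ and $b:=pa/(1-p)$. Then $\E{X}=0$ and $b>a$. Also $\E{X^3}=pa^3-(1-p)b^3=pa(a^2-b^2)<0\le M\,\E{X^2}$, yet $\p{X>2M}=p>1/2$, with $p$ arbitrarily close to $1$. The signed third moment cannot detect a long left tail, and such a tail is what lets most of the mass sit above the mean. So no argument that uses only $\E{X}=0$ and $\E{X^3}\le M\,\E{X^2}$ can reach $1/2$, and the moment-splitting plan has nothing to close.

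Switching to the absolute third moment of $S$ does not rescue it. By Jensen, $\E{|S|^3}\ge(\E{S^2})^{3/2}=(n\sigma^2)^{3/2}$, and this exceeds $M\,\E{S^2}=Mn\sigma^2$ whenever $\sigma\sqrt n>M$. So the theorem's hypothesis does not pass to $S$ with the same constant, exactly in the regime where a nontrivial argument is needed. Your computation $\E{S^3}=n\,\E{W^3}$ is correct, but it throws away the information that matters.

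What is missing is a quantitative distributional input about the sum itself. Your fallback (symmetrization, conditioning, induction on $n$) does not supply one. The paper splits on $\sigma$:
\begin{itemize}
\item If $\sigma\le\sqrt2M/\sqrt n$, Chebyshev gives $\p{|S|>2M}\le n\sigma^2/(4M^2)\le 1/2$.
\item Otherwise it normalizes by $\sigma$ and notes $\E{|W/\sigma|^3}\le M/\sigma$. It then applies the Berry--Esseen inequality with the explicit constant $1/2$. This gives $\p{S\le 2M}\ge\Phi(x)-x/4$ with $x:=2M/(\sigma\sqrt n)\in[0,\sqrt2]$, where $\Phi$ is the standard normal distribution function.
\item Since $\Phi(x)-x/4$ is concave on $[0,\sqrt2]$, checking the two endpoints shows it is at least $1/2$ on the whole interval.
\end{itemize}
The small explicit Berry--Esseen constant really matters: the target probability is exactly $1/2$, so a classical, larger constant would not close the estimate at $x=\sqrt2$.
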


\begin{proof}[Proof sketch]
If the variance is small, i.e., if $\E{W^2} \le 2M^2/n$,
then \eqref{eq:one-sided} follows from Chebyshev's inequality.
The complementary regime follows from the Berry--Esseen theorem~\citep{ber41,ess42,kor12} after normalization. The full proof is deferred to Appendix~\ref{app:one-sided}.
\end{proof}

%\paragraph{Why a one-sided inequality?}
The threshold in Theorem~\ref{thm:one-sided} is of order $1/n$, whereas generic concentration inequalities are typically most informative at order $n^{-1/2}$. Thus, standard two-sided concentration bounds do not directly provide a useful lower bound of constant order for the one-sided event considered here.
The point of Theorem~\ref{thm:one-sided} is therefore not to improve generic tail bounds uniformly, but rather to furnish exactly the one-sided estimate that leads to the $O(d/N)$ convex-hull approximation rate in Theorem~\ref{thm:main-convex-hull}.

\subsection{Random convex-hull approximation of the mean}

To pass from one-dimensional probabilistic inequality to convex-hull approximation, we use the concept of relaxed Tukey depth \citep{hayakawa21a}, building on the classical notion of Tukey or halfspace depth~\citep{tuk75,rou99}. Given a $d$-dimensional random vector $\bm{Y}$,
for $\bm{\theta}\in\R^d$ and $\varepsilon\ge0$, define
\[
\alpha_{\bm{Y}}^\varepsilon(\bm{\theta})
:=
\inf_{\bm{c}\in\R^d,\ |\bm{c}|=1}\p{\bm{c}^\top(\bm{Y}-\bm{\theta})\le \varepsilon}.
\]
It is previously proven \citep[Theorem 14]{hayakawa21a} that if $\alpha_{\bm{Y}}^\varepsilon(\bm{\theta})$ is bounded away from zero, we can quantify the distance between the random convex hull and $\bm{\theta}$ as follows:
\begin{prop}[{\citep[Theorem 14]{hayakawa21a}}]\label{prop:hayakawa-bound}
    Given $d$-dimensional i.i.d.~random vectors $\bm{Y}, \bm{Y}_1, \ldots, \bm{Y}_N$, let 
    $\bm\theta \in\R^d$ and $\ve\ge0$. If we have $N \ge 3d/\alpha_{\bm{Y}}^\ve(\bm{\theta})$, then we have
    \[
        \P{\dist\!\left(\bm\theta, \cv\{\bm{Y}_1,\ldots,\bm{Y}_N\}\right)\le \ve}
        >1-2^{-d},
    \]
    where, for $A\subset\R^d$,
    $\cv A:=\{\sum_{j=1}^m c_j\bm{a}_j\mid m\ge1,\ c_j\ge0,\ \sum_{j=1}^mc_j=1\}$ denotes the convex hull of $A$ and 
    $\dist(\bm\theta, \cv A):=\inf_{\bm{a}\in \cv\! A}\lvert\bm\theta-\bm{a}\rvert$ denotes the distance between $\bm\theta$ and $\cv A$.
\end{prop}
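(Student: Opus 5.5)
The plan is a separating-hyperplane argument combined with a counting bound on how many halfspaces the sample can produce. If $\dist(\bm\theta,\cv\{\bm Y_1,\dots,\bm Y_N\})>\ve$, the compact convex hull can be strictly separated from the closed ball of radius $\ve$ around $\bm\theta$. So there is a unit vector $\bm c$ with $\bm c^\top(\bm Y_i-\bm\theta)>\ve$ for every $i$. Each single $\bm c$ has probability at most $(1-\alpha)^N$ of this happening, where $\alpha:=\alpha_{\bm Y}^\ve(\bm\theta)$. The difficulty is that $\bm c$ ranges over a continuum, so we cannot simply union-bound over directions.

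To overcome this, I would use a symmetrization (doubling) trick in the spirit of Wendel's theorem or the Vapnik--Chervonenkis argument.
\begin{enumerate}
\item Draw $2N$ points and condition on the unordered multiset $\{\bm Y_1,\dots,\bm Y_{2N}\}$. Split it uniformly at random into a first half $S$ and a second half $S'$.
\item Consider the bad event that some unit $\bm c$ satisfies $\bm c^\top(\bm y-\bm\theta)>\ve$ for all $\bm y\in S$, and, for that same $\bm c$, at least about $\alpha N/2$ points of $S'$ satisfy $\bm c^\top(\bm y-\bm\theta)\le\ve$. By the definition of $\alpha$ and a lower-tail (Chernoff or median) bound on $S'$, this event has probability at least a constant times the original failure probability. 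This is the standard ghost-sample step.
\item Bound the number of distinct labelings of the $2N$ points induced by the sets $\{\bm y:\bm c^\top(\bm y-\bm\theta)>\ve\}$. These are affine halfspaces in $\R^d$, so by Sauer--Shelah (VC dimension $d+1$) the count is at most $\sum_{k\le d+1}\binom{2N}{k}\lesssim (2eN/(d+1))^{d+1}$.
\item Fix one labeling that puts at least $\alpha N/2$ points outside the halfspace. A random split places none of those points in $S$ with probability at most about $2^{-\alpha N/2}$.
\item Union-bound over labelings to get a failure probability of at most
\[ C\,(2eN/d)^{d+1}\,2^{-\alpha N/2}. \]
\end{enumerate}

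Finally, plug in $N\ge 3d/\alpha$. Step 5 then yields an exponentially small failure probability, and this is where the $2^{-d}$ comes from. I expect the constant bookkeeping here to be the main obstacle. The polynomial factor $(N/d)^{d+1}$ must be beaten by $2^{-\alpha N/2}$ with margin $2^{-d}$ for every $N\ge3d/\alpha$. With the crude Sauer bound and only $\alpha N\ge 3d$, this is delicate: $(3e/\alpha)^{d+1}$ grows as $\alpha\to0$.

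To handle this, I would first try a sharper, Wendel-style count. Condition on the $2N$ points and use exchangeability: the probability that a random $N$-subset misses a given set of size $m$ is $\binom{2N-m}{N}/\binom{2N}{N}\le 2^{-m}$. I would pair this with the count of halfspace cuts that actually separate, and choose the thresholds so that the total reduces to $\sum_{k<d}\binom{N-1}{k}$ against $2^{N}$-type terms. Alternatively, I would reproduce the argument of \citep[Theorem 14]{hayakawa21a} directly. That argument iteratively adds $\approx 1/\alpha$ fresh samples per dimension and uses a projection or induction on $d$: each block of about $3/\alpha$ samples succeeds with probability above $1/2$ in reducing the dimension by one, which gives a bound like $1-2^{-d}$. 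The latter route matches the stated constants $3d/\alpha$ and $2^{-d}$ more naturally, so I would commit to it if the VC constants refuse to close.
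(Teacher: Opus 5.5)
\textbf{There is a genuine gap: the proposal does not prove the statement with the stated constants.} The paper gives no proof of this proposition; it imports it from \citep[Theorem~14]{hayakawa21a}. Your proposal does not yet replace that citation.

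Your opening reduction is correct. If $\dist(\bm\theta,\cv\{\bm Y_1,\dots,\bm Y_N\})>\ve$, strict separation gives a unit $\bm c$ with $\bm c^\top(\bm Y_i-\bm\theta)>\ve$ for all $i$, and each fixed $\bm c$ has probability at most $(1-\alpha)^N$. But the VC/ghost-sample route of Steps~1--5 cannot reach the hypothesis $N\ge 3d/\alpha$, and the obstruction is not constant bookkeeping. Any argument that union-bounds over the halfspace-induced labelings of the sample pays a combinatorial factor at least of order $(N/d)^{d-1}$. At $N\asymp d/\alpha$ this is of order $\alpha^{-(d-1)}$, while the probabilistic gain is $2^{-\alpha N/2}$ (at best $e^{-\alpha N}$), i.e.\ only $e^{-\Theta(d)}$ uniformly in $\alpha$. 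Such arguments therefore need $N\gtrsim (d/\alpha)\log(1/\alpha)$, the familiar logarithmic loss of $\ve$-net bounds. Concretely, at $N=3d/\alpha$ your Step~5 bound is $C(6e/\alpha)^{d+1}2^{-3d/2}$, which exceeds $1$ even for $\alpha=1$.

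The Wendel-style variant does not help. Wendel's exact count of separable sign patterns relies on the sign-flip symmetry of a symmetric law, which a general $\bm Y$ lacks. Pairing $\binom{2N-m}{N}/\binom{2N}{N}\le 2^{-m}$ with any count of cuts again yields $N^{\Theta(d)}2^{-\Theta(\alpha N)}$. (In the paper the proposition is only applied with $\alpha\ge 1/2$, through Proposition~\ref{prop:depth-lower-bound}, where the logarithm costs only constants; but the statement covers every $\alpha$.)

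The fallback is a guess at the cited proof rather than an argument, and its bookkeeping points the wrong way. Suppose success requires each of $d$ successive blocks to reduce the dimension, each doing so with probability above $1/2$. Then the overall success probability is only bounded below by $2^{-d}$, not by $1-2^{-d}$. The other reading, in which failure requires all $d$ blocks to fail, would need a single block of about $3/\alpha$ points to succeed on its own with probability at least $1/2$. That is false: for a symmetric law in general position in $\R^4$ one has $\alpha=1/2$, yet by Wendel's formula $6=3/\alpha$ points contain the origin in their hull with probability only $6/32$.

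To get a $2^{-d}$ failure bound at $N\alpha\asymp d$ you need a mechanism that avoids union-bounding over cells altogether. One possibility would be to dominate the failure event by a binomial-type lower tail: fewer than about $d$ ``progress'' events among $N$ fresh samples, each occurring with conditional probability at least $\alpha$ given the past. Chernoff bounds such a tail by roughly $(3e^{-2})^d<2^{-d}$ once $N\alpha\ge 3d$. Establishing something of this kind is the real content of the proposition, and nothing in your proposal supplies it.
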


To bound the relaxed Tukey depth in our case,
applying Theorem~\ref{thm:one-sided} to all one-dimensional projections yields the following lower bound.

\begin{prop}\label{prop:depth-lower-bound}
Let $\bm{X}$ be a centered $d$-dimensional random vector such that $|\bm{X}|\le M$ almost surely, and let $\bm{S}_n:=n^{-1}\sum_{i=1}^n \bm{X}_i$ with $\bm{X}_1,\dots,\bm{X}_n$ i.i.d.\ copies of $\bm{X}$.
Then, we have
$
\alpha_{\bm{S}_n}^{2M/n}(\bm{0})\ge 1/2
$.
\end{prop}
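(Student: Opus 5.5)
By the definition of relaxed Tukey depth, $\alpha_{\bm{S}_n}^{2M/n}(\bm{0})$ is an infimum over unit vectors $\bm{c}\in\R^d$. It therefore suffices to fix an arbitrary $\bm{c}$ with $|\bm{c}|=1$ and show
\[
\p{\bm{c}^\top \bm{S}_n \le \frac{2M}{n}} \ge \frac12 ,
\]
uniformly in $\bm{c}$. Taking the infimum over $\bm{c}$ then gives the claim.

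For fixed $\bm{c}$, I would set $W:=\bm{c}^\top\bm{X}$ and $W_i:=\bm{c}^\top\bm{X}_i$. We then have the following properties.
\begin{itemize}
\item The $W_i$ are independent copies of $W$, because the $\bm{X}_i$ are i.i.d.\ copies of $\bm{X}$ and $\bm{x}\mapsto\bm{c}^\top\bm{x}$ is a fixed measurable map.
\item $W$ is centered: $\E{W}=\bm{c}^\top\E{\bm{X}}=0$. This is legitimate since $\bm{X}$ is bounded and hence integrable.
\item $|W|\le|\bm{c}|\,|\bm{X}|\le M$ almost surely, by Cauchy--Schwarz.
\end{itemize}
The bound $|W|\le M$ yields $\E{|W|^3}\le M\,\E{W^2}<\infty$, which is exactly the hypothesis of Theorem~\ref{thm:one-sided}; this is the ``in particular'' clause of that theorem. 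Also $\bm{c}^\top\bm{S}_n = n^{-1}\sum_{i=1}^n W_i$.

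Applying Theorem~\ref{thm:one-sided} with this $W$ and the same $n$ gives
\[
\p{\frac1n\sum_{i=1}^n W_i\le \frac{2M}{n}}\ge \frac12 .
\]
By the identity above, this is precisely $\p{\bm{c}^\top(\bm{S}_n-\bm{0})\le 2M/n}\ge 1/2$. The constant $M$ and the resulting bound do not depend on $\bm{c}$, so taking the infimum over the unit sphere gives $\alpha_{\bm{S}_n}^{2M/n}(\bm{0})\ge 1/2$.

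The only point needing care is the degenerate direction in which $W=0$ almost surely, so that $\E{W^2}=0$. There the moment condition holds trivially as $0\le 0$. Even without invoking the theorem, the probability equals $1\ge 1/2$ because $2M/n>0$. I expect no real obstacle: all the analytic work is in Theorem~\ref{thm:one-sided}, and this proposition is just its uniform application over all one-dimensional projections.
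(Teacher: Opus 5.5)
Your proposal is correct and follows the paper's proof exactly: project onto an arbitrary unit vector $\bm{c}$, note that $W_i=\bm{c}^\top\bm{X}_i$ are centered i.i.d.\ with $|W_i|\le M$, apply Theorem~\ref{thm:one-sided} to $n^{-1}\sum_i W_i=\bm{c}^\top\bm{S}_n$, and take the infimum over $\bm{c}$. Your extra remarks on integrability and the degenerate direction $W=0$ are harmless additions that the paper leaves implicit.
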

\begin{proof}
For any unit vector $\bm{c}\in\R^d$, the scalar random variables $W_i:=\bm{c}^\top \bm{X}_i$ satisfy $|W_i|\le M$. Applying Theorem~\ref{thm:one-sided} to $n^{-1}\sum_i W_i=\bm{c}^\top \bm{S}_n$ gives the result.
\end{proof}

Combining Propositions~\ref{prop:hayakawa-bound}~\&~\ref{prop:depth-lower-bound}, we obtain the main approximation result.

\begin{thm}[Random convex-hull approximation of the mean]\label{thm:main-convex-hull}
Let $\bm{X}$ be a centered $d$-dimensional random vector such that $|\bm{X}|\le M$ almost surely. Then, for any positive integer $n$, if $N\ge 6dn$, the convex hull of $N$ i.i.d.\ copies $\bm{X}_1,\dots,\bm{X}_N$ of $\bm{X}$ satisfies
\begin{equation}
    \p{\dist\!\left(\bm{0},\cv\{\bm{X}_1,\dots,\bm{X}_N\}\right)\le \frac{2M}{n}}
    > 1-2^{-d}.
    \label{eq:cv-main}
\end{equation}
\end{thm}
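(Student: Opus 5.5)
The plan is to group the $N$ samples into blocks of size $n$ and apply Proposition~\ref{prop:hayakawa-bound} to the block averages rather than to the raw samples. Let $K:=\lfloor N/n\rfloor$. Since $N\ge 6dn$, we have $K\ge 6d$. For $k=1,\dots,K$, define the block means
\[
\bm{S}^{(k)}:=\frac1n\sum_{i=(k-1)n+1}^{kn}\bm{X}_i .
\]
Because the blocks are disjoint, $\bm{S}^{(1)},\dots,\bm{S}^{(K)}$ are i.i.d.\ copies of $\bm{S}_n$ from Proposition~\ref{prop:depth-lower-bound}. Each $\bm{S}^{(k)}$ is an equal-weight convex combination of points of $\{\bm{X}_1,\dots,\bm{X}_N\}$. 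Hence
\[
\cv\{\bm{S}^{(1)},\dots,\bm{S}^{(K)}\}\subset \cv\{\bm{X}_1,\dots,\bm{X}_N\},
\]
and therefore
\[
\dist\bigl(\bm{0},\cv\{\bm{X}_i\}_{i=1}^N\bigr)\le \dist\bigl(\bm{0},\cv\{\bm{S}^{(k)}\}_{k=1}^K\bigr)
\]
pointwise. It therefore suffices to show that the right-hand side is at most $2M/n$ with probability greater than $1-2^{-d}$.

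Next, apply Proposition~\ref{prop:hayakawa-bound} with $\bm{Y}=\bm{S}_n$, $\bm{Y}_k=\bm{S}^{(k)}$, $\bm\theta=\bm{0}$, $\ve=2M/n$, and sample size $K$ in place of $N$. By Proposition~\ref{prop:depth-lower-bound}, $\alpha_{\bm{S}_n}^{2M/n}(\bm{0})\ge 1/2$. The size condition of Proposition~\ref{prop:hayakawa-bound} then becomes
\[
K\ge 3d/\alpha_{\bm{S}_n}^{2M/n}(\bm{0}),
\]
which is implied by $K\ge 6d$. The proposition gives
\[
\P{\dist\bigl(\bm{0},\cv\{\bm{S}^{(k)}\}\bigr)\le 2M/n}>1-2^{-d},
\]
and combining this with the inclusion from the first step yields \eqref{eq:cv-main}.

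The only delicate point is the reduction from the raw samples to the block means. Applying Proposition~\ref{prop:hayakawa-bound} directly to $\bm{X}_i$ would require a depth bound at level $2M/n$ for a single sample, which we do not have. The depth bound in Proposition~\ref{prop:depth-lower-bound} applies to averages of $n$ samples, and that is why the blocking is needed. Two details should be checked: the blocks must be disjoint so that the $\bm{S}^{(k)}$ are genuinely independent, and the leftover $N-Kn$ samples can simply be discarded, which is harmless because adding points only shrinks the distance to the convex hull. The assumptions of Proposition~\ref{prop:depth-lower-bound} (centering and $|\bm{X}|\le M$) are exactly those of the present statement, so no further verification is required.
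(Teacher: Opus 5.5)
Your proposal is correct and follows the paper's proof essentially verbatim. The steps match: disjoint block averages of size $n$ are i.i.d.\ copies of $\bm{S}_n$, Proposition~\ref{prop:depth-lower-bound} gives depth at least $1/2$ at level $2M/n$, Proposition~\ref{prop:hayakawa-bound} then applies, and the inclusion of convex hulls transfers the bound. The only cosmetic difference is that you use all $\lfloor N/n\rfloor\ge 6d$ blocks while the paper uses exactly $6d$ of them, which changes nothing.
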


\begin{proof}[Proof]
Let $\bm{S}_n^{(i)}:=n^{-1}\sum_{j=1}^n\bm{X}_{(i-1)n+j}$ be the block average for $i=1,\ldots, 6d$.
They are i.i.d. copies of the $d$-dimensional random vector $\bm{S}_n$ in Proposition~\ref{prop:depth-lower-bound}.
Since $6d\ge 3d/\alpha_{\bm{S}_n}^{2M/n}(\bm{0})$
from Proposition~\ref{prop:depth-lower-bound}, applying Proposition~\ref{prop:hayakawa-bound} yields
\begin{equation}
    \P{\dist(\bm{0}, \cv\{\bm{S}_n^{(1)},\ldots, \bm{S}_n^{(6d)}\})\le \frac{2M}n} > 1-2^{-d}.
    \label{eq:intermediate-convex-hull}
\end{equation}
Since $\bm{S}_n^{(i)}\in\cv\{\bm{X}_1,\ldots,\bm{X}_N\}$,
we also have
$\cv\{\bm{S}_n^{(1)},\ldots, \bm{S}_n^{(6d)}\}\subset\cv\{\bm{X}_1,\ldots,\bm{X}_N\}$.
From this inclusion and \eqref{eq:intermediate-convex-hull},
we can obtain the desired bound \eqref{eq:cv-main}.
\end{proof}
Note that, if $\bm{X}$ is not assumed centered but still satisfies $|\bm{X}|\le M$ almost surely, then with probability at least $1-2^{-d}$ there exists a convex combination $(c_i)_{i=1}^N\in\Delta_N$ such that
\[
\left\lvert
\sum_{i=1}^N c_i \bm{X}_i-\E{\bm{X}}
\right\rvert
\le \frac{4M}{\lfloor N/(6d)\rfloor}.
\]
Indeed, one applies Theorem~\ref{thm:main-convex-hull} to the centered random vector $\bm{X}-\E{\bm{X}}$, which is bounded in norm by $2M$. Thus, the key message in this section is that, once convex combinations are allowed for i.i.d.\ vectors, the mean vector can be approximated at scale $\ord{d/N}$ rather than the usual $\ord{N^{-1/2}}$ of equal-weight averaging.

\section{Consequences for Kernel Quadrature}
\label{sec:kq}

We now transfer the random convex-hull approximation result in the previous section to RKHS worst-case errors.
We prove that a convex combination with a favorable error bound \textit{exists} in Section~\ref{sec:kq-existence}
and discuss how to obtain a practical set of weights in Section~\ref{sec:kq-construction}.

\subsection{Augmented mean approximation for bounding worst-case errors}\label{sec:kq-existence}

Recall from Section~\ref{sec:setup} that we have the Mercer expansion
$k(x,y)=\sum_{j=1}^\infty \sigma_j e_j(x)e_j(y)$
with respect to $\mu$, where $(e_j)_{j\ge 1}$ is  an orthonormal set in $L^2(\mu)$ and $\sigma_1\ge \sigma_2\ge \cdots \ge 0$.
For $d\ge 1$, define the following functions, which we use in the later technical statements and proofs:
\begin{itemize}
    \item $d$-dimensional feature map
        $\bm{\Phi}_d(x):=(\sqrt{\sigma_1}e_1(x),\dots,\sqrt{\sigma_d}e_d(x))^\top\in\R^d$,
    \item the residual kernel $k_{>d}(x,y):=\sum_{j>d}\sigma_j e_j(x)e_j(y)$ and its ``norm'' $g_d(x):=\sqrt{k_{>d}(x,x)}$,
    \item and the augmented feature map
        $\bm{\Psi}_d(x):=(\bm{\Phi}_d(x)^\top,g_d(x))^\top\in\R^{d+1}$.
\end{itemize}

The additional coordinate $g_d$ controls the residual kernel and allows us to transfer augmented
mean approximation to full worst-case error.

\begin{prop}[Transfer from augmented mean approximation to KQ]
\label{prop:augmented-transfer}
Let $\bm{w}=(w_i)_{i=1}^N\in\Delta_N$ and define $Q_N(f):=\sum_{i=1}^N w_i f(x_i)$. If
$\bigl\lvert\mathbb{E}_{x\sim\mu}[\bm{\Psi}_d(x)]-\sum_{i=1}^N w_i \bm{\Psi}_d(x_i)\bigr\rvert\le \varepsilon$
for some $\varepsilon\ge 0$, then
\[
\wce(Q_N;\Hil, \mu)\le 2\int g_d(x)\,\dd\mu(x)+\ve
\le 2\sqrt{\sum_{j>d}\sigma_j}+\ve.
\]
\end{prop}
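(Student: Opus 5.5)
We start from the norm identity \eqref{eq:wce-norm-form} and split the kernel as $k=k_{\le d}+k_{>d}$, where $k_{\le d}(x,y)=\bm{\Phi}_d(x)^\top\bm{\Phi}_d(y)$. Write $\nu:=\sum_i w_i\delta_{x_i}$ and $\eta:=\mu-\nu$, a signed measure of total mass zero. Then
\[
\wce(Q_N;\Hil,\mu)^2=\iint k\,\dd\eta\,\dd\eta
=\Bigl|\int\bm{\Phi}_d\,\dd\eta\Bigr|^2+\iint k_{>d}(x,y)\,\dd\eta(x)\,\dd\eta(y).
\]
The first term is the squared error of the first $d$ coordinates of the augmented mean. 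The second term is the squared norm of $\int k_{>d}(x,\cdot)\,\dd\eta$ in the RKHS of $k_{>d}$. So we bound the two pieces separately and recombine them. This uses $\sqrt{a^2+b^2}\le a+b$, or more carefully Cauchy--Schwarz on the two-component vector.

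For the residual piece, write $m^{>d}_\mu=\int k_{>d}(x,\cdot)\,\dd\mu$ and $m^{>d}_\nu=\sum_i w_i k_{>d}(x_i,\cdot)$, and apply the triangle inequality in $\Hil_{>d}$:
\[
\|m^{>d}_\mu-m^{>d}_\nu\|\le\|m^{>d}_\mu\|+\|m^{>d}_\nu\|\le\int g_d\,\dd\mu+\sum_i w_i g_d(x_i).
\]
This holds because $\|k_{>d}(x,\cdot)\|=g_d(x)$ and the norm of a Bochner integral is at most the integral of the norms. Let $\varepsilon_0:=\bigl|\int\bm{\Phi}_d\,\dd\eta\bigr|$ and $\varepsilon_1:=\bigl|\int g_d\,\dd\mu-\sum_i w_i g_d(x_i)\bigr|$. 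By the last coordinate of the hypothesis, $\sum_i w_i g_d(x_i)\le\int g_d\,\dd\mu+\varepsilon_1$. The residual term is therefore at most $2\int g_d\,\dd\mu+\varepsilon_1$.

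Combining the pieces, $\wce\le\varepsilon_0+2\int g_d\,\dd\mu+\varepsilon_1$. This is slightly too crude, since $\varepsilon_0+\varepsilon_1$ can reach $\sqrt2\,\varepsilon$. To obtain exactly $\varepsilon$, we keep the quadratic form. Setting $A:=2\int g_d\,\dd\mu$, we have
\[
\wce^2\le\varepsilon_0^2+(A+\varepsilon_1)^2\le\bigl(A+\sqrt{\varepsilon_0^2+\varepsilon_1^2}\bigr)^2=(A+\varepsilon)^2 ,
\]
where the middle step expands both sides and uses $\varepsilon_1\le\sqrt{\varepsilon_0^2+\varepsilon_1^2}$. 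Hence $\wce\le 2\int g_d\,\dd\mu+\varepsilon$.

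The second inequality is Jensen's inequality: $\int g_d\,\dd\mu\le(\int g_d^2\,\dd\mu)^{1/2}$. Moreover, $\int k_{>d}(x,x)\,\dd\mu=\sum_{j>d}\sigma_j$ by orthonormality of the $e_j$ in $L^2(\mu)$, after exchanging sum and integral by monotone convergence, since all terms are nonnegative.

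The main technical care is justifying that $k_{>d}$ is a positive-definite kernel whose RKHS norm of $\int k_{>d}(x,\cdot)\,\dd\eta$ equals the double integral. This in turn requires the decomposition $\iint k\,\dd\eta\,\dd\eta=\iint k_{\le d}+\iint k_{>d}$. Boundedness of $k$ by $\kappa^2$ gives integrability and Bochner measurability, and the Mercer series converges pointwise with nonnegative diagonal terms, so these exchanges are legitimate. I expect this bookkeeping, rather than the inequalities themselves, to be the main obstacle.
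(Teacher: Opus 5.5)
Your proof is correct and follows essentially the paper's route: split off $\bigl|\int\bm{\Phi}_d\,\dd\eta\bigr|^2$, bound the residual quadratic form by $\bigl(\int g_d\,\dd\mu+\sum_i w_i g_d(x_i)\bigr)^2$, control $\sum_i w_i g_d(x_i)$ through the last coordinate of $\bm{\Psi}_d$, and finish with $\sqrt{u^2+(v+w)^2}\le v+\sqrt{u^2+w^2}$ and Cauchy--Schwarz. (Your final ``$=(A+\varepsilon)^2$'' should read ``$\le$''.) The only difference is that the paper gets the residual bound by integrating the pointwise inequality $|k_{>d}(x,y)|\le g_d(x)g_d(y)$ against $\mu$ and the atoms, which avoids the RKHS-of-$k_{>d}$ and Bochner-integral bookkeeping you identified as the main obstacle.
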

\begin{proof}[Proof sketch]
It follows by carefully treating the truncation of the ``infinite-dimensional feature'' $(\sqrt{\sigma_j}e_j(x))_{j\ge1}$,
extending an existing argument treating the case $\sum_{i=1}^N w_i \bm{\Phi}_d(x_i)=\mathbb{E}_{x\sim\mu}[\bm{\Phi}_d(x)]$ \citep[Theorem~16]{hayakawa21b}.
Indeed, the first $d$ terms of the ``feature'' are precisely represented by $\bm\Phi_d(x)$ and the remaining term can be controlled by $g_d(x)$.
The full proof is given in Appendix~\ref{proof:augmented-transfer}.
\end{proof}

We can now state our main existential result for positive-weight KQ.

\begin{thm}[Analysis of positive-weight KQ via augmented convex-hull approximation]
\label{thm:kq-main}
Let positive integers $N, d$ satisfy $6(d+1)\le N$ and $x_1,\dots,x_N\sim\mu$ be independent.
Then, with probability at least $1-2^{-(d+1)}$, there exists a quadrature rule
$Q_N(f)=\sum_{i=1}^N w_i f(x_i)$, supported on $\{x_1,\dots,x_N\}$ with $(w_i)_{i=1}^N\in\Delta_N$ such that, for $n_d:=\left\lfloor \frac{N}{6(d+1)}\right\rfloor$
and
$\kappa:=\sup_{x\in\X}\sqrt{k(x, x)}$,
\[
\wce(Q_N;\Hil, \mu)\le 2\sqrt{\sum_{j>d}\sigma_j}+\frac{4\kappa}{n_d}.
\]
\end{thm}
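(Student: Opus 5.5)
The plan is to apply Theorem~\ref{thm:main-convex-hull} in dimension $d+1$ to the augmented feature vectors and then pass to the worst-case error through Proposition~\ref{prop:augmented-transfer}.

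Set $\bm{Y}_i:=\bm{\Psi}_d(x_i)\in\R^{d+1}$. These are i.i.d.\ copies of $\bm{Y}:=\bm{\Psi}_d(x)$ with $x\sim\mu$.

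\textbf{Step 1 (boundedness).} We need an almost-sure bound on $|\bm{Y}|$. Pointwise,
\[
|\bm{\Psi}_d(x)|^2=\sum_{j\le d}\sigma_j e_j(x)^2+k_{>d}(x,x)=\sum_{j\ge1}\sigma_j e_j(x)^2=k(x,x)\le\kappa^2 .
\]
Hence $|\bm{Y}|\le\kappa$, and the centered vector $\bm{X}:=\bm{Y}-\E{\bm{Y}}$ satisfies $|\bm{X}|\le 2\kappa$.

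This is the delicate step. The Mercer expansion converges $\mu$-a.e.\ (or pointwise under the continuity assumptions), so the identity may only hold almost surely. That suffices, because the $x_i$ are $\mu$-distributed, so we only need the bound almost surely. The integrability of $g_d$ and the existence of $\E{\bm{Y}}$ follow from the same bound.

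\textbf{Step 2 (convex-hull approximation).} Apply Theorem~\ref{thm:main-convex-hull} in dimension $d+1$ with $M=2\kappa$ and $n=n_d=\lfloor N/(6(d+1))\rfloor$. The hypothesis $N\ge 6(d+1)$ guarantees $n_d\ge1$, and $N\ge 6(d+1)n_d$ holds by construction. We use only the first $6(d+1)n_d$ samples, which is harmless since adding points can only enlarge the convex hull.

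With probability greater than $1-2^{-(d+1)}$, the distance from $\bm 0$ to $\cv\{\bm{X}_1,\dots,\bm{X}_N\}$ is at most $4\kappa/n_d$. The convex hull is a compact polytope, so the infimum is attained. This gives $\bm{w}\in\Delta_N$ with
\[
\Bigl|\sum_i w_i\bm{\Psi}_d(x_i)-\E{\bm{\Psi}_d(x)}\Bigr|\le 4\kappa/n_d .
\]
Here we use that $\sum_i w_i\bm{X}_i=\sum_i w_i\bm{Y}_i-\E{\bm{Y}}$, since the weights sum to one.

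\textbf{Step 3 (transfer).} Invoke Proposition~\ref{prop:augmented-transfer} with $\varepsilon=4\kappa/n_d$. On the event from Step 2, this gives
\[
\wce(Q_N;\Hil,\mu)\le 2\sqrt{\textstyle\sum_{j>d}\sigma_j}+4\kappa/n_d .
\]
The main obstacle is Step 1, namely justifying $|\bm{\Psi}_d|\le\kappa$ $\mu$-a.s.\ from the Mercer identity. Everything else is a direct combination of earlier results.
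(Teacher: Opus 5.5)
Your proposal is correct and follows the paper's proof essentially step for step. You bound $|\bm{\Psi}_d(x)|^2=k(x,x)\le\kappa^2$, center to get $M=2\kappa$, apply Theorem~\ref{thm:main-convex-hull} in dimension $d+1$ with $n=n_d$, and transfer through Proposition~\ref{prop:augmented-transfer} with $\varepsilon=4\kappa/n_d$. Your extra remarks (the distance is attained on the compact hull, the simplex weights undo the centering, the diagonal Mercer identity is only needed $\mu$-a.s.) just make explicit details the paper leaves implicit.
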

\begin{proof}
    We can obtain the result by combining Proposition~\ref{prop:augmented-transfer}
    and Theorem~\ref{thm:main-convex-hull} regarding the $(d+1)$-dimensional random vector
    $\bm{\Psi}_d(x)$ with $x\sim\mu$.
    The full proof is given in Appendix~\ref{proof:kq-main}. 
\end{proof}
We then state the spectral consequences in terms of direct assumptions on the eigenvalue decay $(\sigma_j)_{j\ge1}$ rather than on the tail sum in the following remark. Its proof is given in Appendix~\ref{proof:kq-order}.

\begin{rem}[Spectral regimes and failure probability]
\label{rem:spectral-regimes}
Theorem~\ref{thm:kq-main} implies 
$
\wce(Q_N;\Hil, \mu)
=
\mathcal O\bigl(
(\sum_{j>d}\sigma_j)^{1/2}
+
d/N
\bigr)
$
with probability at least $1 - 2^{-(d+1)}$.
We can obtain bounds solely in terms of $N$ by balancing the two terms in the estimate;
such optimizations yield the following rates under spectral decay assumptions.
\begin{itemize}
    \item If
    $
    \sigma_j=\mathcal O(j^{-1-\beta})$ for some $\beta>0$,
    then
    $
    \wce(Q_N;\Hil, \mu)=\mathcal O\!\left(N^{-\beta/(\beta+2)}\right).
    $
    \item If
    $
    \sigma_j=\mathcal O(e^{-cj^{\gamma}})$ for some $c,\gamma>0$,
    then
    $
    \wce(Q_N;\Hil, \mu)=\ord{N^{-1}(\log N)^{1/\gamma}}.
    $
\end{itemize}
In the polynomial case, improvement over the Monte Carlo rate can be shown when $\beta>2$.

Although the failure probability $2^{-(d+1)}$ is not explicit in the displayed rates above,
it affects the rate only logarithmically.
Indeed, a size-$N$ pool contains $B$ independent blocks of size
$N_0=\lfloor N/B\rfloor$. Thus, applying Theorem~\ref{thm:kq-main} to each block and
taking the best block reduces the failure probability to at most $2^{-B(d+1)}$, while $N$ in the finite-sample term is replaced by
$N_0$. Hence, choosing
$B=\lceil \log_2(1/\delta)/(d+1)\rceil$ for $\delta<1$ and assuming
$N_0\ge 6(d+1)$ yields a finite-sample term of order
\[
    \frac{d}{N_0}
    =
    \ord{\frac{dB}{N}}
    =
    \ord{\frac{d(1 + \log_2(1/\delta)/(d+1))}{N}}
    =
    \ord{\frac{d+\log(1/\delta)}{N}}.
\]
Thus the failure-probability parameter $\delta$ affects the rate only logarithmically.
\end{rem}

\subsection{A constructive version via Frank--Wolfe}\label{sec:kq-construction}

Theorem~\ref{thm:kq-main} is existential: it shows that, with high probability, a good positive-weight rule supported on the evaluated pool exists. On a fixed pool, the optimal weight is the solution of the simplex-constrained convex quadratic program (CQP):
\begin{equation}
    \min_{\bm w\in\Delta_N}
    \left\|
    m_\mu-\sum_{i=1}^N w_i k(x_i,\cdot)
    \right\|_\Hil^2
    =
    \min_{\bm w\in\Delta_N}
    \bigl(
    \bm w^\top K \bm w - 2 \bm z^\top \bm w
    + \|m_\mu\|_{\mathcal H}^2
    \bigr),
    \label{eq:cqp}
\end{equation}
where $K=(k(x_i,x_j))_{i,j=1}^N$ and $\bm z=(m_\mu(x_i))_{i=1}^N$,
and the constant $\|m_\mu\|_{\mathcal H}^2$ can be dropped for finding weights.
Thus, Theorem~\ref{thm:kq-main} may be read as a statistical guarantee for this CQP objective.

To obtain a practical counterpart, we approximate this pool-supported positive optimum by the Frank--Wolfe algorithm~\citep{jaggi13}.
The application of Frank--Wolfe to KQ is not new: kernel herding \citep{che10} can be reinterpreted as a conditional-gradient method for a quadratic moment-discrepancy objective \citep{bac12}, and \citet{bri15} even proposes a BQ based on nodes selected by Frank--Wolfe.
However, the Frank--Wolfe in our context is used as a practical and lightweight way of approximately solving \eqref{eq:cqp}.

Let $\mathcal A_N:=\{k(x_1,\cdot),\dots,k(x_N,\cdot)\}\subset\Hil$ with $\mathcal M_N:=\cv(\mathcal A_N)$, and let $J_N(h):=\frac12\|m_\mu-h\|_\Hil^2$ for $h\in\mathcal M_N$. Minimizing $J_N$ over $\mathcal M_N$ is equivalent to minimizing the squared worst-case error over all KQs supported on the pool with weights in $\Delta_N$. We therefore run Frank--Wolfe on $J_N$ over $\mathcal M_N$ (concretely, Algorithm~\ref{algo:fw-kq}). The proof of the following theorem is given in Appendices~\ref{app:fw-rkhs}~\&~\ref{proof:kq-fw}.

\begin{thm}[Constructive positive-weight KQ]
\label{thm:kq-fw}
Assume the setting of Theorem~\ref{thm:kq-main}, and suppose that the target kernel mean can be evaluated on the pool, i.e.\ $m_\mu(x_i)=\int k(x,x_i)\,\dd\mu(x)$ is available for $i=1,\dots,N$. Let $\bm{w}^{(T)}$ be the output of Algorithm~\ref{algo:fw-kq} after $T$ steps, and define $Q_{N,T}(f):=\sum_{i=1}^N w_i^{(T)} f(x_i)$. Then, with probability at least $1-2^{-(d+1)}$, we have
\[
\wce(Q_{N,T};\Hil,\mu)
\le
2\sqrt{\sum_{j>d}\sigma_j}
+
\frac{4\kappa}{n_d}
+
\frac{4\kappa}{\sqrt{T+2}}.
\]
\end{thm}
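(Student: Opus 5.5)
The plan is to split the error into a statistical part and an optimization part. The statistical part is controlled by Theorem~\ref{thm:kq-main}, which is applied to the best positive-weight rule on the pool. The optimization part is controlled by a standard Frank--Wolfe rate on the compact convex set $\mathcal M_N$. Let $h^\star\in\arg\min_{h\in\mathcal M_N}\|m_\mu-h\|_\Hil$; the minimum is attained because $\mathcal M_N$ is a finite-dimensional compact convex set. Theorem~\ref{thm:kq-main} asserts that, on an event of probability at least $1-2^{-(d+1)}$, some $\bm w\in\Delta_N$ satisfies $\|m_\mu-\sum_i w_i k(x_i,\cdot)\|_\Hil\le 2\sqrt{\sum_{j>d}\sigma_j}+4\kappa/n_d$. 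On that event, minimality gives $\|m_\mu-h^\star\|_\Hil\le 2\sqrt{\sum_{j>d}\sigma_j}+4\kappa/n_d$ as well. I expect the Frank--Wolfe analysis to be deterministic given the pool, so the probability statement comes entirely from this step.

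Next I run the Frank--Wolfe analysis for $J_N(h)=\tfrac12\|m_\mu-h\|_\Hil^2$ on $\mathcal M_N$. The gradient is $h-m_\mu$, so the linear minimization oracle selects an atom $k(x_i,\cdot)$ minimizing $\langle h-m_\mu,k(x_i,\cdot)\rangle_\Hil$, which equals $h(x_i)-m_\mu(x_i)$. This is computable from the Gram matrix and $\bm z$, and the iterate $h^{(T)}=\sum_i w_i^{(T)}k(x_i,\cdot)$ stays in $\mathcal M_N$, so $\bm w^{(T)}\in\Delta_N$. The objective is $1$-smooth. The diameter of $\mathcal M_N$ is at most $\max_{i,j}\|k(x_i,\cdot)-k(x_j,\cdot)\|_\Hil\le 2\kappa$, since $\|k(x,\cdot)\|_\Hil=\sqrt{k(x,x)}\le\kappa$. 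Hence the curvature constant satisfies $C_f\le \mathrm{diam}^2\le 4\kappa^2$. Jaggi's bound with step size $2/(t+2)$ gives
\[
J_N(h^{(T)})-J_N(h^\star)\le \frac{2C_f}{T+2}\le\frac{8\kappa^2}{T+2}.
\]
The exact constant depends on the indexing convention of Algorithm~\ref{algo:fw-kq}, which I would verify in Appendix~\ref{app:fw-rkhs}.

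Finally I convert this objective gap into a norm bound. Write $a=\|m_\mu-h^{(T)}\|_\Hil$ and $b=\|m_\mu-h^\star\|_\Hil$. Then $a^2\le b^2+16\kappa^2/(T+2)$, and the inequality $\sqrt{b^2+c^2}\le b+c$ yields $a\le b+4\kappa/\sqrt{T+2}$. Combining this with the first step and with \eqref{eq:wce-norm-form} gives the stated bound, which holds on the same event of probability at least $1-2^{-(d+1)}$.

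The main obstacle is bookkeeping rather than substance. I need to match the Frank--Wolfe rate constant, including $C_f$ versus $\mathrm{diam}^2$ and the $T+2$ indexing, so that the optimization term comes out as exactly $4\kappa/\sqrt{T+2}$. I also need to justify the infinite-dimensional use of Frank--Wolfe. This is immediate, because all iterates lie in the finite-dimensional span of $\mathcal A_N\cup\{m_\mu\}$, where the standard analysis applies verbatim.
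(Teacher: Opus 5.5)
Your proposal is correct and follows the paper's proof essentially step for step. The statistical term comes from Theorem~\ref{thm:kq-main} applied to the pool-optimal rule, the optimization gap $8\kappa^2/(T+2)$ comes from a Frank--Wolfe rate with diameter bound $2\kappa$, and the two are combined through $\wce^2=2J_N$ and $\sqrt{b^2+c^2}\le b+c$. The only difference is that the paper proves the rate directly by induction (Theorem~\ref{thm:fw-kernel}, with base case $\Delta_0\le 2\kappa^2$ from $\|m_\mu\|_\Hil\le\kappa$) instead of citing Jaggi's $2C_f/(T+2)$ bound; Jaggi's statement covers only $T\ge1$, but the case $T=0$ is trivial because $\|m_\mu-k(x_{i_0},\cdot)\|_\Hil\le 2\kappa$.
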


% Thus $\wce(Q_{N,T};\Hil,\mu)=\mathcal O\!\left(\sqrt{\sum_{j>d}\sigma_j}+\frac{d}{N}+\frac{1}{\sqrt{T}}\right)$.
In particular, choosing $T\asymp (N/d)^2$ makes the additional optimization term $\ord{d/N}$, so the constructive bound matches the existential rate from Theorem~\ref{thm:kq-main} up to constants. Unlike the analysis in Section~\ref{sec:kq-existence}, the algorithm in Theorem~\ref{thm:kq-fw} \emph{does not} require access to the Mercer expansion or the truncation dimension $d$; these are used only to prove the existential guarantee.

\paragraph{Computational complexity.}
The CQP \eqref{eq:cqp} computes the exact positive optimum on the pool, but as a dense convex quadratic program it is not necessarily the most favorable route from a worst-case computational viewpoint. By contrast, if the Gram matrix $K$ and the vector $\bm z=(m_\mu(x_i))_{i=1}^N$ are precomputed, then each Frank--Wolfe iteration requires one linear minimization over the $N$ atoms together with an $\ord{N}$ update of the current scores. Thus the optimization cost is $\ord{NT}$ after an $\mathcal{O}(N^2)$ preprocessing step. With the choice $T\asymp (N/d)^2$, this becomes $\mathcal{O}(N^2+N^3/d^2)$, while retaining the same order in $N$ as Theorem~\ref{thm:kq-main}.

% \paragraph{Beyond the exact-target setting.}
% The exact-target assumption is convenient for the main statements, but the same framework extends directly to approximate kernel means. In particular, if $m_\mu$ is replaced by an approximation $\widehat m\in\Hil$, then Proposition~\ref{prop:aux-transfer} in Appendix~\ref{app:aux-transfer} shows that the total quadrature error is bounded by the sum of the approximation error $\|m_\mu-\widehat m\|_\Hil$ and the positive-reweighting error from the main construction.

\paragraph{Beyond the exact-target setting.}
The exact-target assumption is convenient for
the main statements, but the same fixed-pool viewpoint also yields a simple
robustness guarantee when only an approximate target kernel mean is available.
Suppose that the candidate pool is still sampled from $\mu$, but that the
algorithm uses a proxy kernel mean $m_{\hat\mu}$ given by the kernel mean of a large empirical proxy measure
$\hat\mu$. Let
$
    E_N(d)
    :=
    2(\sum_{j>d}\sigma_j)^{1/2}
    + 4\kappa / n_d
$
denote the bound in Theorem~\ref{thm:kq-main}. On the event of Theorem~6, there exists a positive KQ rule $Q^*_N$ such that
$\wce(Q_N^*;\Hil, \mu)\le E_N(d)$.
Therefore, from the triangle inequality, the same weights satisfy
$
    \wce(Q_N^*;\Hil,\hat{\mu})
    \le E_N(d)+\|m_\mu-m_{\hat\mu}\|_{\mathcal H}.
$
Consequently, if $Q_N$ is a KQ with weights minimizing the proxy fixed-pool objective given by replacing $\mu$ with $\hat{\mu}$ in \eqref{eq:cqp}, then we have
\begin{align}
    \wce(Q_N; \Hil, \mu)
    &\le \wce(Q_N; \Hil, {\hat\mu})
        + \|m_\mu-m_{\hat\mu}\|_{\mathcal H}\nonumber\\
    &\le 
    \wce(Q_N^*;\Hil,\hat{\mu})
        + \|m_\mu-m_{\hat\mu}\|_{\mathcal H}
    \le E_N(d)+2\|m_\mu-m_{\hat\mu}\|_{\mathcal H},
    \label{eq:triangle-inequality}
\end{align}
where the first inequality is by the triangle inequality, and the second inequality comes from the optimality of $Q_N$. 
Thus, approximate target kernel means introduce an additive error controlled by
the RKHS distance between the true and proxy kernel means. The same conclusion
holds for approximate solvers such as Frank--Wolfe, with an additional term of
optimization error.

\begin{algorithm}[t]
\caption{Positive-weight KQ via Frank--Wolfe on the RKHS atomic hull}
\label{algo:fw-kq}
\begin{algorithmic}[1]
\Require Kernel $k$, kernel mean values $m_\mu(x_1),\dots,m_\mu(x_N)$, pool points $x_1,\dots,x_N$, iterations $T$
\Ensure Positive weights $\bm w^{(T)}\in\Delta_N$
\State Initialize $i_0\in\arg\min_{i\in[N]} \bigl(k(x_i,x_i)-2m_\mu(x_i)\bigr)$
\State Set $\bm w^{(0)}:=\bm{1}_{i_0}$, where $\bm{1}_i\in\Delta_N$ denotes the $i$-th unit vector
\For{$t=0,1,\dots,T-1$}
    \State Compute $h^{(t)}(\cdot)=\sum_{i=1}^N w_i^{(t)}k(x_i,\cdot)$
    \State Select
    $
        i_t\in\arg\min_{i\in[N]}\bigl(h^{(t)}(x_i)-m_\mu(x_i)\bigr)
    $
    \State Set $\bm{s}^{(t)}:=\bm{1}_{i_t}$ and $\gamma_t:=2/(t+2)$
    \State Update
    $
        \bm w^{(t+1)}=(1-\gamma_t)\bm w^{(t)}+\gamma_t \bm{s}^{(t)}
    $
\EndFor
\State \Return $\bm w^{(T)}$
\end{algorithmic}
\end{algorithm}

\section{Experiments}\label{sec:experiments}

We use experiments to test whether the fixed-pool picture developed above is visible in practice.
For each point budget $N$, we construct a quadrature rule over multiple independent trials and report the RKHS worst-case error
$
    \wce(Q_N;\Hil,\mu)
    =
    \lVert m_\mu-\sum_{i=1}^N w_i k(x_i,\cdot)\rVert_{\Hil}
$
as a function of $N$.
We consider two benchmark families. The first consists of periodic Sobolev
kernels on $[0,1]^p$ with uniform target distribution, using
$(p,s)=(1,1),(1,3),(2,5)$, where $p$ is the input dimension and $s$ is the
smoothness. The second is an empirical RBF target in $\mathbb{R}^2$, given
by the uniform empirical measure on $10{,}000$ synthetic support points from
a four-component Gaussian mixture. In these settings, the target kernel mean
is available exactly on the pool, either analytically or by finite summation.
All main curves are averaged over $20$ independent trials, with
$N\in\{4,8,16,32,64,128\}$ for Sobolev and
$N\in\{4,8,16,32,64,128,256\}$ for the empirical RBF target.
We compare the following procedures:
\begin{itemize}
    \item \textbf{Fixed-pool CQP}: a numerical solution of the simplex-constrained CQP~\eqref{eq:cqp}, used as an approximation to the pool-supported positive optimum;
    \item \textbf{Fixed-pool FW}: our constructive method, which runs Frank--Wolfe directly in the RKHS over the convex hull of the pool atoms, with $T=N^2$ steps and the fixed step-size rule $\gamma_t=2/(t+2)$ unless otherwise stated;
    \item \textbf{Herding baselines}: depending on the experiment, either resample herding or global herding;
    \item \textbf{Monte Carlo}: equal-weight averaging on $N$ i.i.d.\ samples.
\end{itemize}
The comparison is meant to separate three effects: the statistical benefit of positive reweighting on a fixed evaluated pool, the optimization gap between Frank--Wolfe and the CQP on that pool, and the difference between our fixed-pool viewpoint and more classical herding-style constructions.
For the herding baselines, we use standard equal-weight kernel herding, with
fresh i.i.d. candidate sets for continuous Sobolev targets and the full support
for empirical targets. These baselines are not like-for-like fixed-pool methods,
but serve as standard constructive KQ references.

\paragraph{Why include herding baselines?}
Our theory is tailored to fixed-pool positive reweighting: an evaluated pool is given, and the task is to construct a stable quadrature rule supported on that pool.
Herding-type methods address a different problem, since they typically build the rule by sequential point selection rather than by reweighting a fixed pool.
We nevertheless include them as reference points because they are standard constructive baselines in kernel quadrature.
Thus, the comparison is not meant as a like-for-like match of assumptions, but rather as evidence that the fixed-pool positive approach remains competitive even relative to classical point-selection procedures.

In the main text, we include both smooth regimes, where the spectral advantage predicted by Section~\ref{sec:kq} should be most visible, and a lower-smoothness Sobolev regime as a challenging control.
Additional experimental details and further ablations, including timing, Frank--Wolfe step-size and budget ablations, and resample-herding candidate-size ablations, are deferred to Appendix~\ref{app:exp-details}.

\begin{figure}[t]
    \centering
    \includegraphics[width=\textwidth]{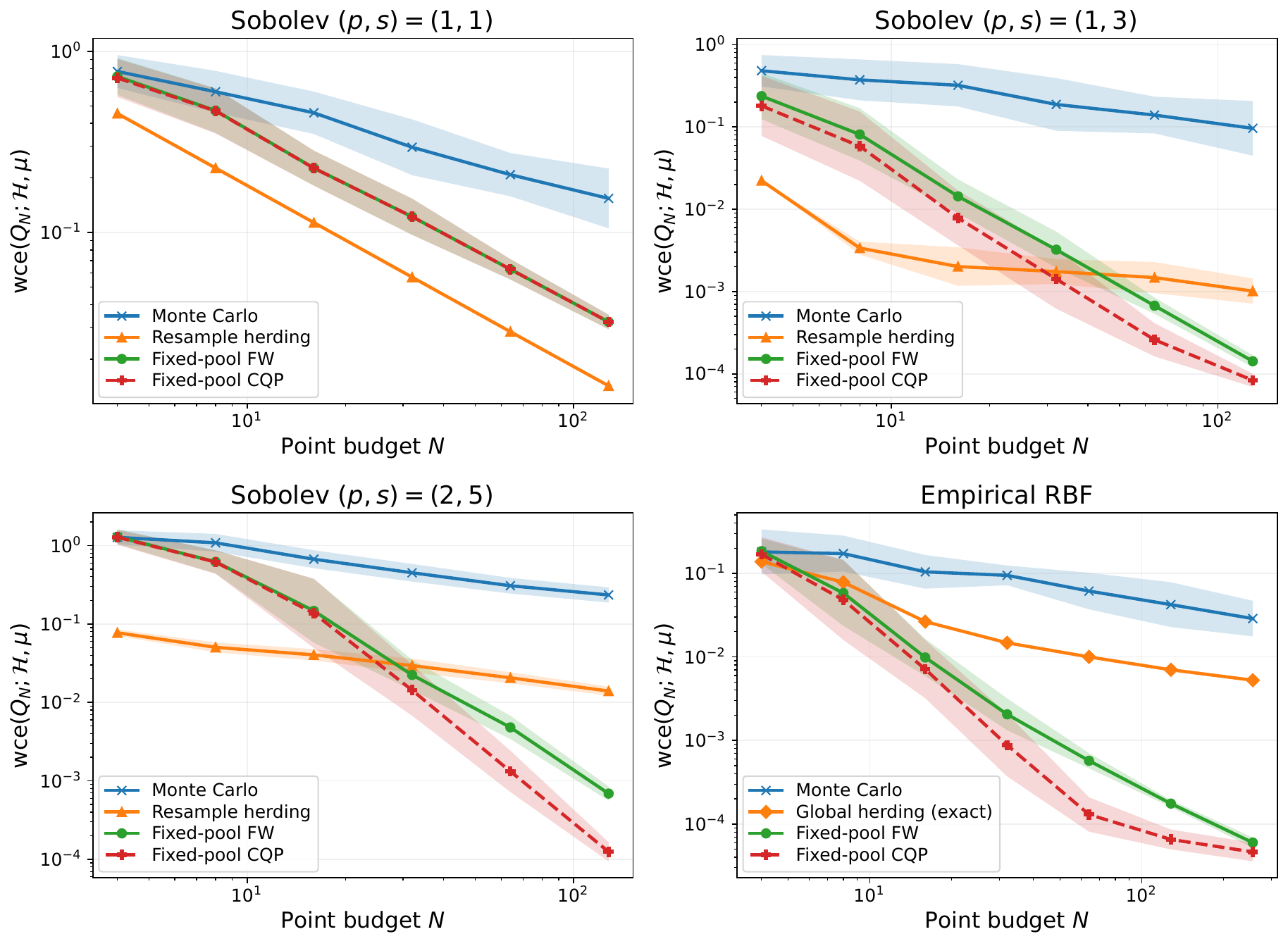}
    \caption{
    Main experimental comparison in terms of RKHS worst-case error.
    The first three panels show periodic Sobolev experiments with continuous target distribution, including one lower-smoothness control $(p,s)=(1,1)$ and two smoother regimes $(p,s)=(1,3)$ and $(p,s)=(2,5)$.
    The final panel shows an empirical-target experiment with an RBF kernel.
    We compare fixed-pool Frank--Wolfe with $T(N)=N^2$, the fixed-pool CQP, the appropriate herding baseline, and Monte Carlo.
    Shaded regions indicate standard deviation across trials, computed in log scale.
    }
    \label{fig:main-wce}
\end{figure}

Figure~\ref{fig:main-wce} shows the main results.
Across all settings, fixed-pool CQP and fixed-pool FW substantially improve over Monte Carlo, confirming that positive reweighting of an evaluated i.i.d.\ pool can exploit kernel structure beyond equal-weight averaging.
The fixed-pool FW curves closely track the fixed-pool CQP in the smoother Sobolev regimes and in the empirical RBF experiment, supporting the constructive role of Frank--Wolfe as a practical approximation to the pool-supported positive optimum.
In the lower-smoothness Sobolev case $(p,s)=(1,1)$, the gap between positive fixed-pool reweighting and Monte Carlo is smaller, and resample herding outperforms them.
This is consistent with the theory: the strongest spectral improvements are expected in favorable spectral regimes, while lower-smoothness cases provide a more challenging control.
In the smoother Sobolev regimes and in the empirical RBF experiment, the fixed-pool positive methods are markedly stronger at moderate and large budgets.
For the empirical target, global herding is competitive at small budgets, but fixed-pool FW and CQP achieve lower worst-case error overall.

\section{Limitations}\label{sec:limitations}
Our theoretical guarantees are tailored to fixed-pool kernel quadrature, where an evaluated candidate pool is already available and the goal is to construct a stable positive-weight rule supported on that pool. Accordingly, the paper does not provide a new theory of subsampling or sequential point selection in the more general kernel-quadrature setting. In that sense, our Frank--Wolfe procedure should be interpreted as a constructive approximation to the pool-supported positive optimum, rather than as a novel point-selection algorithm. Moreover, the guarantees themselves could further be sharpened in future work, since the empirical results suggest even faster convergence.

A second limitation is the exact-target assumption: evaluating $m_\mu$ exactly on the pool may itself be nontrivial. While an error estimate based on the triangle inequality~\eqref{eq:triangle-inequality} can be a point of compromise, this problem is universal for KQ methods, except for empirical-target or otherwise analytically tractable settings including Stein kernels~\citep{oat17,south2022semi,anastasiou2023stein}.

That said, the empirical comparisons with herding suggest that the fixed-pool positive approach remains competitive even relative to standard sequential baselines. Thus, whenever an evaluated pool is naturally available the method appears practically relevant beyond the exact scope of the theory.

\section{Conclusion}
We studied positive-weight kernel quadrature through random convex-hull
approximation of mean vectors. Our main result shows that, by using a
residual-diagonal augmentation of the truncated Mercer representation, one
obtains full RKHS error bounds of the form
$
    \wce(Q_N;\Hil,\mu)
    =
    \mathcal{O}\bigl((\sum_{j>d}\sigma_j)^{1/2} + d/N\bigr),
$
and hence rates beyond Monte Carlo in favorable spectral regimes despite
the simplex-weight constraint. We also gave a constructive counterpart by
running Frank--Wolfe directly in the RKHS on the convex hull of the pool
atoms, maintaining positive simplex weights together with explicit
optimization-error bounds. Overall, the paper gives both a geometric
explanation and a practical constructive route for stable positive
quadrature on an evaluated pool.

\section*{Acknowledgments}
The author is grateful to Harald Oberhauser and Terry Lyons for useful discussions on the topic.

\bibliography{cite}
\bibliographystyle{abbrvnat}
\appendix

\newpage

\section{A normalization viewpoint for unbounded diagonals}
\label{sec:bounded-reduction}

Suppose $k(x,x)>0$ holds for all $x\in\X$ and
$\int k(x,x)\,\dd\mu(x)<\infty$.
Write
\[
    r(x):=\sqrt{k(x,x)}
\]
and define the normalized kernel
\[
    \tilde k(x,y):=\frac{k(x,y)}{r(x)r(y)}.
\]
Then $\tilde k(x,x)\equiv 1$, so $\tilde k$ has bounded diagonal.
Let $\Hil$ and $\tilde\Hil$ denote the RKHSs associated with $k$ and $\tilde k$, respectively.
We also write $
    m_\rho^h := \int h(x,\cdot)\,\dd\rho(x)
$
for the kernel mean embedding of a probability measure $\rho$ with respect to a kernel $h$.
Thus $m_\mu^k$ is the original kernel mean embedding, while $m_{\tilde\mu}^{\tilde k}$ denotes the kernel mean embedding associated with the normalized kernel $\tilde k$ and the tilted measure $\tilde\mu$ introduced below.

The normalization induces an isometric correspondence between the two RKHSs.
Define
\[
    U:\tilde\Hil\to\Hil,
    \qquad
    (Ug)(x):=r(x)g(x).
\]
To see that $U$ is an isometry, first consider finite linear combinations
$g=\sum_{i=1}^m a_i\tilde k(x_i,\cdot)$.
Hence
\[
    \|Ug\|_{\Hil}^2
    =
    \nor{\sum_{i=1}^m a_i\frac{k(x_i,\cdot)}{r(x_i)}}_{\Hil}^2
    =
    \sum_{i,j=1}^m
    a_i a_j
    \frac{k(x_i,x_j)}{r(x_i)r(x_j)}
    =
    \sum_{i,j=1}^m
    a_i a_j
    \tilde k(x_i,x_j)
    =
    \|g\|_{\tilde\Hil}^2.
\]
Since the linear span of $\{\tilde{k}(x,\cdot)\mid x\in\X\}$ is dense in $\tilde\Hil$, the map $U$ extends uniquely to an isometry from $\tilde\Hil$ into $\Hil$.
Moreover, because $r(x)>0$, the linear span of $\{k(x,\cdot)/r(x)\mid x\in\X\}$ is the same as the linear span of $\{k(x,\cdot)\mid x\in\X\}$, and hence the range of $U$ is dense in $\Hil$.
Therefore $U$ identifies $\tilde\Hil$ and $\Hil$ isometrically.
In particular,
\[
    U^{-1}k(x,\cdot)=
    \frac1{r(\cdot)}k(x,\cdot)=
    r(x)\tilde k(x,\cdot).
\]

Now define the tilted probability measure
\[
    \dd\tilde\mu(x):=\frac{r(x)}{Z}\,\dd\mu(x),
    \qquad
    Z:=\int r(x)\,\dd\mu(x).
\]
The constant $Z$ is finite by the Cauchy--Schwarz inequality and the assumption
$\int k(x,x)\,\dd\mu(x)<\infty$.
Using the isometry above, the original kernel mean embedding satisfies
\[
    U^{-1}m_\mu^k
    =
    \int U^{-1}k(x,\cdot)\,\dd\mu(x)
    =
    \int r(x)\tilde k(x,\cdot)\,\dd\mu(x)
    =
    Z\int \tilde k(x,\cdot)\,\dd\tilde\mu(x)
    =
    Zm_{\tilde\mu}^{\tilde k}.
\]
Thus, after normalization, the target is the ordinary kernel mean embedding
$m_{\tilde\mu}^{\tilde k}$ for the bounded kernel $\tilde k$ and the tilted measure $\tilde\mu$, up to the scalar factor $Z$.
This gives the following transfer of KQ errors.
Let $x_1,\ldots,x_N\sim\tilde\mu$ and let $\bm\beta=(\beta_i)_{i=1}^N\in\Delta_N$.
Then, we have
\[
    \left\|
        m_\mu^k
        -
        \sum_{i=1}^N
        \frac{Z\beta_i}{r(x_i)} k(x_i,\cdot)
    \right\|_{\Hil}
    =
    Z
    \left\|
        m_{\tilde\mu}^{\tilde k}
        -
        \sum_{i=1}^N \beta_i \tilde k(x_i,\cdot)
    \right\|_{\tilde\Hil}.
\]
Consequently, a simplex-constrained positive quadrature rule $\tilde{Q}_N(g)=\sum_{i=1}^N\beta_ig(x_i)$
for the normalized problem
$(\tilde k,\tilde\mu)$ transfers to a positive weighted quadrature rule for the original problem
$(k,\mu)$, given by
\[
    Q_N(f)=\sum_{i=1}^N w_if(x_i),
    \qquad
    w_i=\frac{Z\beta_i}{r(x_i)}.
\]
These weights are positive, but they need not sum to one.
Thus, the normalization does not directly preserve the simplex constraint;
instead, it turns the original unbounded-diagonal problem into a weighted
positive KQ problem.

This change-of-measure viewpoint is related in spirit to weighted sampling
and importance-weighting ideas in KQ and BQ, including leverage-score
sampling~\citep{bac17,chatalic25}, weighted sampling
for BQ batch selection~\citep{ada22}, and
$\Pi$-importance sampling for Stein kernel quadrature~\citep{wang23steinpi}.
The distinction is that the present weighting arises from diagonal
normalization of an unbounded kernel and is used only as a reduction
viewpoint. A full theory for this weighted positive KQ setting is beyond
the scope of the present paper.

This viewpoint suggests a possible extension beyond the bounded-kernel
assumption, provided that sampling from $\tilde\mu$ is available and the
spectral decay of the normalized pair $(\tilde k,\tilde\mu)$ can be
controlled.

\section{Proofs}\label{app:proofs}

\subsection{Proof of Theorem~\ref{thm:one-sided}}\label{app:one-sided}
\begin{proof}%[Proog of Theorem~\ref{thm:one-sided}]
For convenience, let
\[
\sigma:=\sqrt{\E{W^2}}.
\]
We distinguish two cases based on the magnitude of $\sigma$.

\paragraph{Case 1: $\sigma \le \sqrt{2}M/\sqrt{n}$.}
Since $W_1,\dots,W_n$ are independent and centered,
\[
\E{\left(\frac1n\sum_{i=1}^n W_i\right)^2}
=
\frac{\sigma^2}{n}
\le
\frac{2M^2}{n^2}.
\]
Therefore, by Chebyshev's inequality,
\[
\p{\left|\frac1n\sum_{i=1}^n W_i\right|>\frac{2M}{n}}
\le
\frac{\E{\left(\frac1n\sum_{i=1}^n W_i\right)^2}}{(2M/n)^2}
\le \frac12.
\]
Hence
\[
\p{\frac1n\sum_{i=1}^n W_i \le \frac{2M}{n}} \ge \frac12.
\]

\paragraph{Case 2: $\sigma \ge \sqrt{2}M/\sqrt{n}$.}
Let $Y:=W/\sigma$ and $Y_i:=W_i/\sigma$. Then $\E{Y}=0$, $\E{Y^2}=1$, and
\[
\E{|Y|^3}
=
\frac{\E{|W|^3}}{\sigma^3}
\le
\frac{M\E{W^2}}{\sigma^3}
=
\frac{M}{\sigma}.
\]
Applying the Berry--Esseen inequality with a constant $1/2$~\citep[Corollary~1]{kor12}, we obtain
\[
\p{\frac1n\sum_{i=1}^n W_i \le \frac{2M}{n}}
=
\p{\frac1{\sqrt{n}}\sum_{i=1}^n Y_i \le \frac{2M}{\sigma\sqrt{n}}}
\ge
\p{Z\le \frac{2M}{\sigma\sqrt{n}}}
-
\frac{M}{2\sigma\sqrt{n}},
\]
where $Z$ is a standard Gaussian. Set
$
x:=\frac{2M}{\sigma\sqrt{n}}.
$
Under the current assumption, $x\in[0,\sqrt{2}]$. Thus it is enough to show that
\[
\p{Z\le x}-\frac{x}{4}\ge \frac12
\qquad\text{for all }x\in[0,\sqrt{2}].
\]
Define $f(x):=\p{Z\le x}-\frac{x}{4}$,
then we have
\[
f'(x)=\frac{1}{\sqrt{2\pi}}e^{-x^2/2}-\frac14,
\qquad
f''(x)=-\frac{x}{\sqrt{2\pi}}e^{-x^2/2},
\]
so $f$ is concave on $[0,\sqrt{2}]$. Therefore it suffices to check the endpoints. We have $f(0)=1/2$. Moreover,
\[
f(\sqrt{2})-\frac12
=
f(\sqrt{2})-f(0)
=
\int_0^{\sqrt{2}}f^\prime(u)\,\dd u
=
\frac{1}{\sqrt{2\pi}}\int_0^{\sqrt{2}}e^{-u^2/2}\,\dd u-\frac{1}{\sqrt{8}}.
\]
Using $e^{-t}\ge 1-t$ for $t\in\R$, we have
\[
\frac{1}{\sqrt{2\pi}}\int_0^{\sqrt{2}}e^{-u^2/2}\,\dd u
\ge
\frac{1}{\sqrt{2\pi}}\int_0^{\sqrt{2}}\left(1-\frac{u^2}{2}\right)\dd u
=
\frac1{\sqrt{2\pi}}\left(\sqrt2 - \frac{2\sqrt{2}}6\right)
=
\frac{2}{3\sqrt{\pi}}.
\]
Hence
\[
f(\sqrt{2})-\frac12
\ge
\frac{2}{3\sqrt{\pi}}-\frac{1}{\sqrt{8}}
=
\frac{1}{\sqrt{8\pi}}\left(\frac{\sqrt{32}}{3}-\sqrt{\pi}\right)>0,
\]
since $32/9=3.55\ldots>\pi$. Therefore $f(x)\ge 1/2$ on $[0,\sqrt{2}]$, and the theorem follows.
\end{proof}

\subsection{Proof of Proposition~\ref{prop:augmented-transfer}}\label{proof:augmented-transfer}

\begin{proof}
First, we approximate the kernel by the inner product of the truncated feature map
$\bm{\Phi}_d$ as follows:
\begin{align}
    \left\lvert k(x, y)
    - \bm{\Phi}_d(x)^\top\bm{\Phi}_d(y)\right\rvert
    &=\left\lvert
        \sum_{j>d}\sigma_je_j(x)e_j(y)
    \right\rvert \nonumber\\
    &\le \sqrt{\sum_{j>d}\sigma_je_j(x)^2}
    \sqrt{\sum_{j>d}\sigma_je_j(y)^2}
    =g_d(x)g_d(y),
    \label{eq:feature-remainder}
\end{align}
where the inequality follows from the Cauchy--Schwarz inequality for sequences.

By exploiting the norm form of the worst-case error \eqref{eq:wce-norm-form},
we have
\begin{align*}
    \wce(Q_N; \Hil,\mu)^2
    &= \ip{\int_\X k(x,\cdot)\,\dd\mu(x)
        - \sum_{i=1}^N w_i k(x_i, \cdot),
        \int_\X k(x,\cdot)\,\dd\mu(x)
        - \sum_{i=1}^N w_i k(x_i, \cdot)}_\Hil\\
    &= \int_\X\int_\X k(x, y)\,\dd\mu(x)\,\dd\mu(y)
        - 2\sum_{i=1}^Nw_i\int_\X k(x, x_i)\,\dd\mu(x)
        + \sum_{i,j=1}^Nw_iw_jk(x_i,x_j).
\end{align*}
If we replace $k(x,y)$ with
$\bm{\Phi}_d(x)^\top\bm{\Phi}_d(y)$ in the right-hand side,
the resulting expression is
\[
    \left\lvert
        \mathbb{E}_{x\sim\mu}[\bm{\Phi}_d(x)]
        - \sum_{i=1}^Nw_i\bm{\Phi}_d(x_i)
    \right\rvert^2 .
\]
Thus, from \eqref{eq:feature-remainder}, we obtain
\begin{align*}
    &\left\lvert\wce(Q_N; \Hil,\mu)^2
    - \left\lvert
        \mathbb{E}_{x\sim\mu}[\bm{\Phi}_d(x)]
        - \sum_{i=1}^Nw_i\bm{\Phi}_d(x_i)
    \right\rvert^2\right\rvert\\
    &\le \int_\X\int_\X g_d(x)g_d(y)\,\dd\mu(x)\,\dd\mu(y)
        + 2\sum_{i=1}^Nw_i\int_\X g_d(x)g_d(x_i)\,\dd\mu(x)
        + \sum_{i,j=1}^Nw_iw_jg_d(x_i)g_d(x_j)\\
    &=
    \left(
        \int_\X g_d(x)\,\dd\mu(x)
        + \sum_{i=1}^Nw_ig_d(x_i)
    \right)^2 \\
    &\le
    \left(
        2\int_\X g_d(x)\,\dd\mu(x)
        +
        \left\lvert
            \mathbb{E}_{x\sim\mu}[g_d(x)]
            - \sum_{i=1}^Nw_ig_d(x_i)
        \right\rvert
    \right)^2.
\end{align*}
Therefore,
\begin{align*}
    &\wce(Q_N;\Hil,\mu)\\
    &\le
    \sqrt{
        \left\lvert
            \mathbb{E}_{x\sim\mu}[\bm{\Phi}_d(x)]
            - \sum_{i=1}^Nw_i\bm{\Phi}_d(x_i)
        \right\rvert^2
        +
        \left(
            2\int_\X g_d(x)\,\dd\mu(x)
            +
            \left\lvert
                \mathbb{E}_{x\sim\mu}[g_d(x)]
                - \sum_{i=1}^Nw_ig_d(x_i)
            \right\rvert
        \right)^2
    } \\
    &\le
    2\int_\X g_d(x)\,\dd\mu(x)
    +
    \sqrt{
        \left\lvert
            \mathbb{E}_{x\sim\mu}[\bm{\Phi}_d(x)]
            - \sum_{i=1}^Nw_i\bm{\Phi}_d(x_i)
        \right\rvert^2
        +
        \left\lvert
            \mathbb{E}_{x\sim\mu}[g_d(x)]
            - \sum_{i=1}^Nw_ig_d(x_i)
        \right\rvert^2
    } \\
    &=
    2\int_\X g_d(x)\,\dd\mu(x)
    +
    \left\lvert
        \mathbb{E}_{x\sim\mu}[\bm{\Psi}_d(x)]
        - \sum_{i=1}^Nw_i\bm{\Psi}_d(x_i)
    \right\rvert \\
    &\le 2\int_\X g_d(x)\,\dd\mu(x) + \ve,
\end{align*}
where the second inequality follows from the elementary inequality
$\sqrt{u^2+(v+w)^2}\le v+\sqrt{u^2+w^2}$ for $u,v,w\ge0$.
Indeed,
\[
    (\mathrm{RHS})^2 - (\mathrm{LHS})^2
    =
    2v\sqrt{u^2+w^2} - 2vw
    =
    2v\bigl(\sqrt{u^2+w^2} - w\bigr)\ge 0.
\]
Finally, by the Cauchy--Schwarz inequality,
\[
    \int_\X g_d(x)\,\dd\mu(x)
    =
    \int_\X \sqrt{\sum_{j>d}\sigma_je_j(x)^2}\,\dd\mu(x)
    \le
    \sqrt{
        \int_\X \sum_{j>d}\sigma_je_j(x)^2\,\dd\mu(x)
    }
    =
    \sqrt{\sum_{j>d}\sigma_j}.
\]
Combining the last two displays gives the desired estimate.
\end{proof}

\subsection{Proof of Theorem~\ref{thm:kq-main}}\label{proof:kq-main}
\begin{proof}%[Proog of Theorem~\ref{thm:kq-main}]
Recall that $\bm{\Psi}_d(x)=(\bm{\Phi}_d(x),g_d(x))\in\R^{d+1}$
and $g_d(x)=\sqrt{\sum_{j>d}\sigma_je_j(x)^2}$. By construction, we have
\[
\lvert \bm{\Psi}_d(x)\rvert^2
=
\sum_{j=1}^d \sigma_j e_j(x)^2
+ \sum_{j>d} \sigma_j e_j(x)^2
=
k(x,x)
\le
\kappa^2,
\]
so $\lvert\bm{\Psi}_d(x)\rvert\le \kappa$ for all $x\in\mathcal X$.

Set $\bm Z:=\bm{\Psi}_d(x)-\mathbb{E}_{x\sim\mu}[\bm{\Psi}_d(x)]$.
Then $\E{\bm Z}=\bm 0$, and
$
\lvert\bm Z\rvert
\le
\lvert \bm{\Psi}_d(x)\rvert+
\mathbb{E}_{x\sim\mu}[\lvert\bm{\Psi}_d(x)\rvert]
\le
2\kappa
$.
Set $n_d:=\left\lfloor \frac{N}{6(d+1)}\right\rfloor$. Then $N\ge 6(d+1)n_d$.
By applying Theorem~\ref{thm:main-convex-hull} to the centered $(d+1)$-dimensional random vector $\bm Z$, we obtain that, with probability at least $1-2^{-(d+1)}$, there exists $\bm w\in\Delta_N$ such that
\[
\left\lvert
\sum_{i=1}^N w_i \bm Z_i
\right\rvert
\le
\frac{4\kappa}{n_d},
\]
where $\bm{Z}_i=\bm\Psi_d(x_i) -\mathbb{E}_{x\sim\mu}[\bm{\Psi}_d(x)]$ with $x_i\sim_{\text{iid}}\mu$.
Equivalently, we have
\[
\left\lvert
\mathbb{E}_{x\sim\mu}[\bm{\Psi}_d(x)]-\sum_{i=1}^N w_i \bm{\Psi}_d(x_i)
\right\rvert
\le
\frac{4\kappa}{n_d}.
\]
Thus, applying Proposition~\ref{prop:augmented-transfer} with $\varepsilon=4\kappa/n_d$ and $Q_N(f):=\sum_{i=1}^Nw_if(x_i)$ gives
\[
\wce(Q_N;\Hil, \mu)
\le
2\sqrt{\sum_{j>d}\sigma_j}+\frac{4\kappa}{n_d},
\]
which completes the proof.
\end{proof}

\subsection{Proof of Remark~\ref{rem:spectral-regimes}}\label{proof:kq-order}

\begin{proof}%[Proog of Remark~\ref{rem:spectral-regimes}]
By Theorem~\ref{thm:kq-main},
\[
\wce(Q_N;\Hil, \mu)
=
\mathcal O\!\left(
\sqrt{\sum_{j>d}\sigma_j}
+
\frac{d}{N}
\right).
\]

\noindent(1) Suppose first that
$
\sigma_j=\mathcal O(j^{-1-\beta})
$ for $\beta>0$.
Since $\sum_{j>d} j^{-1-\beta}
\le \int_d^\infty t^{-1-\beta}\,\dd t = d^{-\beta}/\beta$,
we have
$
    \sum_{j>d}\sigma_j
    =
    \mathcal O(d^{-\beta}).
$
Hence,
\[
\wce(Q_N;\Hil, \mu)
=
\mathcal O\!\left(d^{-\beta/2}+\frac{d}{N}\right).
\]
Balancing the two terms gives
$
d\asymp N^{2/(\beta+2)},
$
and therefore
\[
\wce(Q_N;\Hil, \mu)
=
\mathcal O\!\left(N^{-\beta/(\beta+2)}\right).
\]

\noindent(2) Next, suppose that
$
    \sigma_j=\mathcal O(e^{-cj^\gamma})
$
for some $c,\gamma>0$. Then, we have
\[
    \sum_{j>d}\sigma_j
    =
    \mathcal O\!\left(\sum_{j>d} e^{-cj^\gamma}\right)
    \le\mathcal O\!\left(\int_d^\infty e^{-ct^\gamma}\,\dd t\right).
\]
By the change of variables $u=ct^\gamma$, we obtain
\[
    \int_d^\infty e^{-ct^\gamma}\,\dd t
    =
    \frac{1}{\gamma\, c^{1/\gamma}}
    \int_{cd^\gamma}^\infty u^{1/\gamma-1}e^{-u}\,\dd u
    =
    \mathcal O\!\left(d^{1-\gamma}e^{-cd^\gamma}\right),
\]
and therefore
\[
    \sqrt{\sum_{j>d}\sigma_j}
    =
    \mathcal O\!\left(d^{(1-\gamma)/2}e^{-cd^\gamma/2}\right).
\]
Hence, Theorem~\ref{thm:kq-main} yields
\[
    \wce(Q_N;\Hil,\mu)
    =
    \mathcal O\!\left(
        d^{(1-\gamma)/2}e^{-cd^\gamma/2}
        +
        \frac{d}{N}
    \right).
\]
Now choose
\[
    d:=\left\lceil \left(\frac{2\log N}{c}\right)^{1/\gamma}\right\rceil.
\]
Then,
$
    d^\gamma \ge \frac{2\log N}{c},
$
so
$
    e^{-cd^\gamma/2}\le e^{-\log N}=1/N.
$
Therefore, we have
\[
    d^{(1-\gamma)/2}e^{-cd^\gamma/2}
    \le
    \frac{d^{(1-\gamma)/2}}{N}.
\]
We have
\[
    d
    \le
    \left(\frac{2\log N}{c}\right)^{1/\gamma}+1
    =
    \mathcal O\!\left((\log N)^{1/\gamma}\right).
\]
Since $d\ge 1$, we have
$d^{(1-\gamma)/2}\le d$,
and thus
\[
    d^{(1-\gamma)/2}e^{-cd^\gamma/2}
    =
    \mathcal O\!\left(\frac{(\log N)^{1/\gamma}}{N}\right).
\]
Moreover,
\[
    \frac{d}N
    =
    \mathcal O\!\left(\frac{(\log N)^{1/\gamma}}{N}\right).
\]
Combining the two bounds, we conclude that
\[
    \wce(Q_N;\Hil,\mu)
    =
    \mathcal O\!\left(\frac{(\log N)^{1/\gamma}}{N}\right).\qedhere
\]
\end{proof}

\subsection{Frank--Wolfe in the RKHS atomic hull}\label{app:fw-rkhs}

Algorithm~\ref{algo:fw-kq} shows the Frank--Wolfe procedure used in
Theorem~\ref{thm:kq-fw}. This is a standard conditional-gradient method
applied to the RKHS objective
\[
J_N(h):=\frac12\|m_\mu-h\|_\Hil^2
\]
over the convex hull
\[
\mathcal A_N:=\{k(x_1,\cdot),\dots,k(x_N,\cdot)\}\subset\Hil,
\qquad
\mathcal M_N:=\cv(\mathcal A_N).
\]

\begin{thm}[Frank--Wolfe for pool-supported KQ]
\label{thm:fw-kernel}
Under the above setting,
let $h^{(T)}:=\sum_{i=1}^N w_i^{(T)}k(x_i,\cdot)$ be the output of
Algorithm~\ref{algo:fw-kq}. If $\sup_{x\in\mathcal X}\sqrt{k(x,x)}\le \kappa$, then
for every $T\ge0$,
\[
J_N(h^{(T)})-\inf_{h\in\mathcal M_N}J_N(h)
\le
\frac{8\kappa^2}{T+2}.
\]
\end{thm}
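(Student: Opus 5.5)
We follow the standard Frank--Wolfe analysis of \citet{jaggi13}, specialized to the quadratic objective $J_N$ over $\mathcal M_N$. First we check that Algorithm~\ref{algo:fw-kq} is exactly Frank--Wolfe in $\Hil$ with step $\gamma_t=2/(t+2)$. The gradient of $J_N$ at $h$ is $h-m_\mu$. The linear minimization oracle over $\mathcal M_N$ is attained at an atom, since a linear functional on a convex hull attains its minimum at an extreme point. For the atom $k(x_i,\cdot)$ we have $\langle h^{(t)}-m_\mu,k(x_i,\cdot)\rangle_\Hil=h^{(t)}(x_i)-m_\mu(x_i)$ by the reproducing property, so step~5 is the oracle. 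The iterates satisfy $h^{(t)}=\sum_i w_i^{(t)}k(x_i,\cdot)$ with $\bm w^{(t)}\in\Delta_N$, so they stay in $\mathcal M_N$. The initialization $i_0$ minimizes $J_N$ over the atoms, because $2J_N(k(x_i,\cdot))=k(x_i,x_i)-2m_\mu(x_i)+\|m_\mu\|^2$. This choice is not needed for the rate, but it is harmless.

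Next, set $h^*$ to be a minimizer of $J_N$ on $\mathcal M_N$. It exists because $\mathcal M_N$ is compact and finite-dimensional. Let $\delta_t:=J_N(h^{(t)})-J_N(h^*)$. Since $J_N$ is quadratic with identity Hessian, the update $h^{(t+1)}=h^{(t)}+\gamma_t(s_t-h^{(t)})$ gives the exact expansion
\[
J_N(h^{(t+1)})=J_N(h^{(t)})+\gamma_t\langle h^{(t)}-m_\mu,s_t-h^{(t)}\rangle_\Hil+\tfrac{\gamma_t^2}{2}\|s_t-h^{(t)}\|_\Hil^2 .
\]
By optimality of $s_t$ and convexity, $\langle h^{(t)}-m_\mu,s_t-h^{(t)}\rangle\le\langle h^{(t)}-m_\mu,h^*-h^{(t)}\rangle\le-\delta_t$. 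The diameter of $\mathcal M_N$ is at most $\max_{i,j}\|k(x_i,\cdot)-k(x_j,\cdot)\|_\Hil\le2\kappa$, so $\|s_t-h^{(t)}\|^2\le4\kappa^2$. This gives the recursion $\delta_{t+1}\le(1-\gamma_t)\delta_t+2\gamma_t^2\kappa^2$, i.e.\ curvature constant $C=4\kappa^2$ in Jaggi's notation.

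Finally, we prove $\delta_t\le 2C/(t+2)=8\kappa^2/(t+2)$ by induction. For the base case, the first step has $\gamma_0=1$, so the recursion gives $\delta_1\le 2\kappa^2\le 8\kappa^2/3$ regardless of $\delta_0$. The case $T=0$ needs $\delta_0\le 4\kappa^2$ separately. This follows from $\delta_0\le\langle h^{(0)}-m_\mu,h^{(0)}-h^*\rangle\le\|h^{(0)}-m_\mu\|\cdot 2\kappa$ together with $\|h^{(0)}-m_\mu\|\le2\kappa$, because $\|m_\mu\|\le\kappa$ as a mean of elements of norm at most $\kappa$. Hence $\delta_0\le4\kappa^2$. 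For the inductive step with $t\ge1$, inserting the hypothesis gives $\delta_{t+1}\le\frac{t}{t+2}\cdot\frac{2C}{t+2}+\frac{2C}{(t+2)^2}=\frac{2C(t+1)}{(t+2)^2}\le\frac{2C}{t+3}$. The last inequality holds since $(t+1)(t+3)\le(t+2)^2$.

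The main care point is the constant bookkeeping rather than any conceptual step. Specifically, we must get the diameter bound $2\kappa$, the $T=0$ case, and the identity of the algorithm's selection rule with the RKHS oracle via the reproducing property right.
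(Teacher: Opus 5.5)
Your proof is correct and follows the paper's argument essentially step by step. Both identify step 5 as the linear minimization oracle via the reproducing property, use the exact quadratic expansion with the diameter bound $2\kappa$, derive the recursion $\delta_{t+1}\le(1-\gamma_t)\delta_t+2\gamma_t^2\kappa^2$, and close with the same induction based on $(t+1)(t+3)\le(t+2)^2$. The only difference is the base case: the paper simply notes $\Delta_0\le J_N(h^{(0)})\le 2\kappa^2$ because $J_N\ge 0$. That makes your separate treatment of $t=0$ (via the first-order convexity bound) and of $t=1$ (via $\gamma_0=1$) unnecessary, though not wrong.
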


\begin{proof}
While \citet{jaggi13} gives the general framework,
we give a short proof specialized to the present quadratic objective for completeness.

Let $h^*\in\arg\min_{h\in\mathcal M_N}J_N(h)$, and define
\[
\Delta_t:=J_N(h^{(t)})-J_N(h^*).
\]
Because of the reproducing property and $\mathcal M_N=\cv(\mathcal A_N)$, we have
\[
    \min_{i\in[N]}\{h^{(t)}(x_i)-m_\mu(x_i)\}
    = \min_{i\in[N]}\langle h^{(t)}-m_\mu,k(x_i,\cdot)\rangle_\Hil
\]
Thus, Algorithm~\ref{algo:fw-kq} updates $h^{(t)}$
as
\begin{equation}
    h^{(t+1)} = (1-\gamma_t)h^{(t)} + \gamma_t s^{(t)},
    \qquad s^{(t)}\in\mathop\mathrm{argmin}_{s\in\cv(\mathcal{A}_N)}\langle h^{(t)}-m_\mu, s\rangle_\Hil.
    \label{eq:fw-min-update}
\end{equation}

Now fix $t\ge0$. Since $J_N$ is quadratic, we have
\[
J_N\bigl(h^{(t+1)}\bigr)
=
J_N(h^{(t)})
+
\gamma_t\langle h^{(t)}-m_\mu,\,s^{(t)}-h^{(t)}\rangle_\Hil
+
\frac{\gamma_t^2}{2}\|s^{(t)}-h^{(t)}\|_\Hil^2.
\]
From the assumption, each atom satisfies
$\|k(x_i,\cdot)\|_\Hil=\sqrt{k(x_i,x_i)}\le \kappa$,
so every element of $\mathcal M_N$ has norm at most $\kappa$, and hence
$
\|s^{(t)}-h^{(t)}\|_\Hil\le 2\kappa.
$
Therefore, we obtain
\begin{equation}
    J_N\bigl(h^{(t+1)}\bigr)
    \le
    J_N(h^{(t)})
    +
    \gamma_t\langle h^{(t)}-m_\mu,\,s^{(t)}-h^{(t)}\rangle_\Hil
    +
    2\gamma_t^2\kappa^2.
    \label{eq:fw-1}
\end{equation}
Moreover, by optimality of $s^{(t)}$ in \eqref{eq:fw-min-update},
we have
\begin{equation}
    \langle h^{(t)}-m_\mu,\,s^{(t)}\rangle_\Hil
    \le
    \langle h^{(t)}-m_\mu,\,h^*\rangle_\Hil.
    \label{eq:fw-2}
\end{equation}
Hence, we have
\[
\langle h^{(t)}-m_\mu,\,s^{(t)}-h^{(t)}\rangle_\Hil
\le
\langle h^{(t)}-m_\mu,\,h^*-h^{(t)}\rangle_\Hil
\le
J_N(h^*)-J_N(h^{(t)})
=
-\Delta_t,
\]
where the second inequality can be derived from
the identity $\ip{a, b-a} = \frac12(\nor{b}^2-\nor{a}^2-\nor{b-a}^2)$ with
$a = h^{(t)}-m_\mu$ and $b=h^*-m_\mu$.
Therefore, by using the Frank--Wolfe update
$h^{(t+1)}=(1-\gamma_t)h^{(t)}+\gamma_t s^{(t)}$
with $\gamma_t=2/(t+2)$, we obtain
\begin{equation}
    \Delta_{t+1}
    \le
    (1-\gamma_t)\Delta_t+2\gamma_t^2\kappa^2
    =
    \frac{t}{t+2}\Delta_t+\frac{8\kappa^2}{(t+2)^2}.
    \label{eq:fw-recurr}
\end{equation}

We now prove by induction that
\begin{equation}
    \Delta_t\le \frac{8\kappa^2}{t+2}
    \qquad\text{for all }t\ge0.
    \label{eq:fw-ih}
\end{equation}
For $t=0$, since $\|h^{(0)}\|_\Hil\le \kappa$ and $\|m_\mu\|_\Hil\le \kappa$, we have
\[
\Delta_0\le J_N(h^{(0)})=\frac12\|m_\mu-h^{(0)}\|_\Hil^2\le 2\kappa^2
\le \frac{8\kappa^2}{2}.
\]
Assume the \eqref{eq:fw-ih} for $t$. Then, for $t+1$, \eqref{eq:fw-recurr} becomes
\[
    \Delta_{t+1}
    \le
    \frac{t}{t+2}\cdot \frac{8\kappa^2}{t+2}
    +\frac{8\kappa^2}{(t+2)^2}
    =
    \frac{8\kappa^2(t+1)}{(t+2)^2}
    \le
    \frac{8\kappa^2}{t+3},
\]
since $(t+1)(t+3)\le (t+2)^2$, which proves the induction.

Therefore, we obtain
\[
    J_N(h^{(T)})-\inf_{h\in\mathcal M_N}J_N(h)
    =
    \Delta_T
    \le
    \frac{8\kappa^2}{T+2},
\]
which is the desired conclusion.
\end{proof}

\subsection{Proof of Theorem~\ref{thm:kq-fw}}
\label{proof:kq-fw}

\begin{proof}%[Proog of Theorem~\ref{thm:kq-fw}]
On the event from Theorem~\ref{thm:kq-main}, which has probability at least
$1-2^{-(d+1)}$, the optimal positive-weight rule supported on
$\{x_1,\dots,x_N\}$ satisfies
\[
    \inf_{h\in\mathcal M_N}J_N(h)
    = \frac12 \inf_{\bm w\in\Delta_N}\wce(Q_{N,\bm w};\Hil,\mu)^2
    \le
    \frac12\left(2\sqrt{\sum_{j>d}\sigma_j}+\frac{4\kappa}{n_d}\right)^2.
\]
By combining Theorem~\ref{thm:fw-kernel} with it,
we have
\[
    J_N(h^{(T)})
    \le
    \inf_{h\in\mathcal M_N}J_N(h) + \frac{8\kappa^2}{T+2}
    \le
    \frac12\left(2\sqrt{\sum_{j>d}\sigma_j}+\frac{4\kappa}{n_d}\right)^2
    +\frac{8\kappa^2}{T+2}.
\]
Since $\wce(Q_{N,T};\Hil,\mu)^2 = 2J_N(h^{(T)})$,
we obtain
\[
    \wce(Q_{N,T};\Hil,\mu)^2
    \le
    \left(2\sqrt{\sum_{j>d}\sigma_j}+\frac{4\kappa}{n_d}\right)^2
    +\frac{16\kappa^2}{T+2}.
\]
Taking square roots and using $\sqrt{a^2+b^2}\le a+b$ for $a,b\ge0$, we have
\[
\wce(Q_{N,T};\Hil,\mu)
\le
2\sqrt{\sum_{j>d}\sigma_j}
+\frac{4\kappa}{n_d}
+\frac{4\kappa}{\sqrt{T+2}},
\]
which completes the proof.
\end{proof}

\section{Additional experimental details}
\label{app:exp-details}

This appendix provides the details needed to reproduce and interpret the experiments in Section~\ref{sec:experiments}.
The main-text results are reported in Figure~\ref{fig:main-wce}.
Here we provide the common protocol, benchmark definitions, implementation details, and three additional experimental views: wall-clock time, Frank--Wolfe budget and step-size ablations, and resample-herding candidate-size ablations.
A full rerun of all experiments reported in this paper would take roughly one hour on a single MacBook Air M2.

\subsection{Common protocol}

For each point budget $N$, we run $20$ independent trials unless otherwise stated and report the RKHS worst-case error
\[
    \wce(Q_N;\Hil,\mu)
    =
    \left\|m_\mu-\sum_{i=1}^N w_i k(x_i,\cdot)\right\|_{\Hil}.
\]
For the fixed-pool methods, the same sampled candidate pool is shared by the fixed-pool CQP and Frank--Wolfe within each trial.
Thus, their comparison reflects optimization quality on a common support set rather than variability due to different sampled pools.

In all plots, the shaded bands show standard deviation across trials.
Since the vertical axis is logarithmic, the bands are computed in log scale to avoid distorted lower envelopes near zero.
The elapsed-time plots use total wall-clock time, including both rule construction and final worst-case-error evaluation.

\subsection{Information required by the constructive methods}

For clarity, we distinguish the quantities used in the analysis from those required by the actual algorithms.
The analysis in Section~\ref{sec:kq} introduces the Mercer expansion of the kernel and a truncation dimension $d$.
These quantities are used only to derive the existence and convergence guarantees.
They are not required as algorithmic inputs.

In contrast, the fixed-pool CQP and the Frank--Wolfe construction require only:
\begin{itemize}
    \item an evaluated pool $x_1,\dots,x_N$;
    \item kernel evaluations $k(x_i,x_j)$ on that pool;
    \item the target kernel mean evaluated on the pool, $m_\mu(x_i)=\int k(x,x_i)\,\dd\mu(x)$.
\end{itemize}
In the empirical-target setting, these quantities can be computed exactly from the reference sample.
In the periodic Sobolev experiments, they are also available in closed form or through the high-accuracy numerical evaluation described below.

\subsection{Periodic Sobolev benchmark}

In the periodic Sobolev experiment, we consider $[0,1]^p$ with periodic boundary condition, the uniform target distribution, and a periodic Sobolev kernel of smoothness parameter $s$.
In one dimension, for $x,y\in[0,1]$, the kernel is
\[
    k_s(x, y)
    =
    1
    +
    \frac{(-1)^{s-1}(2\pi)^{2s}}{(2s)!}
    B_{2s}(\lvert x-y\rvert),
\]
where $B_{2s}$ is the Bernoulli polynomial~\citep{wah90}.
For dimension $p\ge2$, we use the coordinate-wise product kernel.
The target kernel mean is available exactly in this setting.

We compare fixed-pool CQP, fixed-pool Frank--Wolfe with $T(N)=N^2$, resample herding, and Monte Carlo over the point budgets
\[
    N\in\{4,8,16,32,64,128\}.
\]
For resample herding, the default candidate-set size is $R=4096$.
Instead of attempting global optimization over $[0,1]^p$, resample herding optimizes over a fresh set of $R$ i.i.d.\ points drawn from the target distribution at each iteration.

\paragraph{Numerical evaluation of higher-order periodic Sobolev kernels.}
For the periodic Sobolev experiments, low smoothness orders are evaluated in closed form.
For higher smoothness orders, namely $s>3$, we instead evaluate the kernel numerically via a truncated Fourier representation.
This is a numerical device for stable kernel evaluation; it is not an additional approximation step in the quadrature algorithm itself.

Concretely, in one dimension we use the periodic Sobolev factor
\[
    K_s(t)=1+2\sum_{m\ge 1}\frac{\cos(2\pi m t)}{m^{2s}},
    \qquad t\in[0,1],
\]
and truncate the series at a level chosen automatically so that the tail is below the prescribed tolerance $10^{-12}$.
In higher dimensions, the kernel is formed by the standard tensor-product construction across coordinates.

Within each run, the same kernel implementation is used consistently for Gram-matrix construction, kernel-mean evaluation, and worst-case-error evaluation.
Thus, the reported results correspond to a single numerically well-defined kernel throughout.
The tolerance $10^{-12}$ was chosen so that the resulting kernel-approximation error is negligible relative to the scale of the reported worst-case errors.
In the main-text periodic Sobolev experiments, the cases $(p,s)=(1,1)$ and $(p,s)=(1,3)$ are evaluated in closed form, while the higher-order case $(p,s)=(2,5)$ uses this Fourier-based implementation.

\subsection{Empirical RBF benchmark}

In the empirical RBF experiment, the target measure is an empirical distribution
\[
    \hat\mu_M=\frac1M\sum_{j=1}^M \delta_{z_j},
\]
where $\{z_j\}_{j=1}^M$ is a reference sample of size $M=10000$ sampled from a Gaussian mixture.
In the experiments reported here, we use a two-dimensional mixture with four components, radius parameter $2.5$, and component standard deviation $0.35$.
We use an RBF kernel
\[
    k(x,y)
    =
    \exp\!\left(-\frac{\|x-y\|^2}{2\ell^2}\right),
\]
with $\ell$ determined by the median heuristic~\citep{gar17}.
In this discrete-target setting, the target kernel mean can be evaluated exactly on candidate points, and global herding is available as a baseline.

We compare fixed-pool CQP, fixed-pool Frank--Wolfe with $T(N)=N^2$, global herding, and Monte Carlo over the point budgets
\[
    N\in\{4,8,16,32,64,128,256\}.
\]

\subsection{Implementation and timing details}

For Frank--Wolfe, the main experiments use the standard fixed step-size schedule $\gamma_t=2/(t+2)$.
The simplex-constrained CQP baseline was solved with
\texttt{scipy.optimize.minimize} using SLSQP. We used the uniform weight vector as initialization, imposed the bounds $w_i\ge 0$ and equality constraint
$\sum_i w_i=1$ for weights, and used $\texttt{ftol}=10^{-12}$ and $\texttt{maxiter}=5000$ as solver settings.
All experiments were run on a MacBook Air with an M2 chip.
The timing results should therefore be interpreted as implementation-dependent wall-clock measurements rather than optimized runtime benchmarks.
% The experimental scripts used to generate the reported figures are included in the supplementary material; the main and appendix experiments are generated by the corresponding shell scripts for the main experiments and appendix ablations.

\subsection{Wall-clock time and accuracy tradeoff}

\begin{figure}[h]
    \centering
    \includegraphics[width=\textwidth]{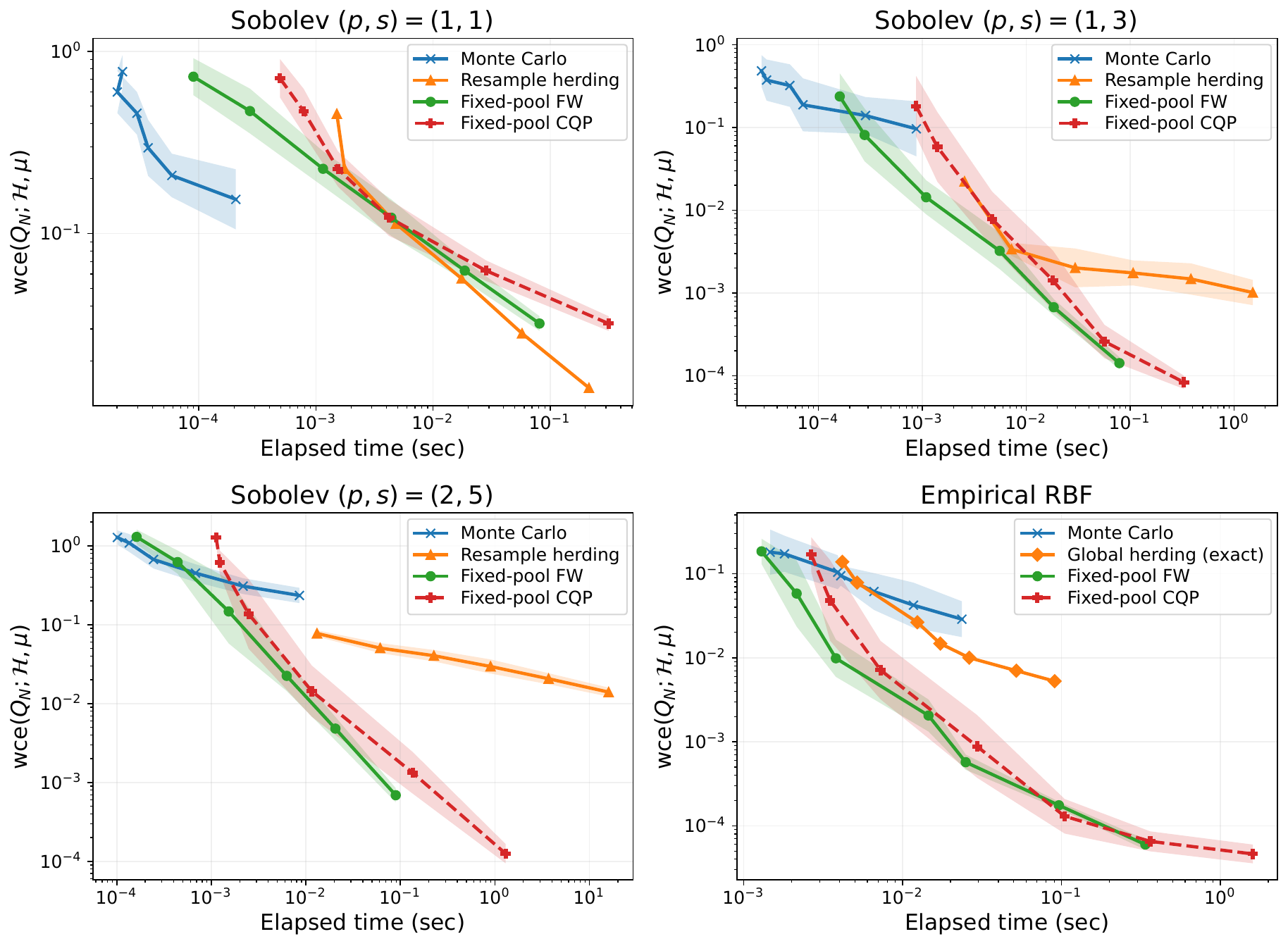}
    \caption{
    End-to-end wall-clock time and accuracy tradeoff for the main experimental settings.
    The horizontal axis shows total elapsed time, including both rule construction and final worst-case-error evaluation.
    The plotted methods are the same as in Figure~\ref{fig:main-wce}.
    }
    \label{fig:time-tradeoff}
\end{figure}

Figure~\ref{fig:time-tradeoff} reports the same main experimental settings as Figure~\ref{fig:main-wce}, but with elapsed time on the horizontal axis.
Monte Carlo is fastest but gives the largest errors.
The fixed-pool CQP is statistically strongest, but solving a dense quadratic program is relatively expensive.
Fixed-pool Frank--Wolfe provides a more favorable accuracy-time tradeoff: it closely approaches the CQP in the smoother Sobolev and empirical RBF settings while avoiding the full CQP solve.
The herding baselines can be competitive in some regimes, especially in the lower-smoothness Sobolev setting, but their practical cost depends on the candidate search problem.
In particular, resample herding becomes less favorable in the higher-dimensional Sobolev setting, where drawing and scanning large candidate sets is costly.

\subsection{Frank--Wolfe budget and step-size ablation}

\begin{figure}[h]
    \centering
    \includegraphics[width=\textwidth]{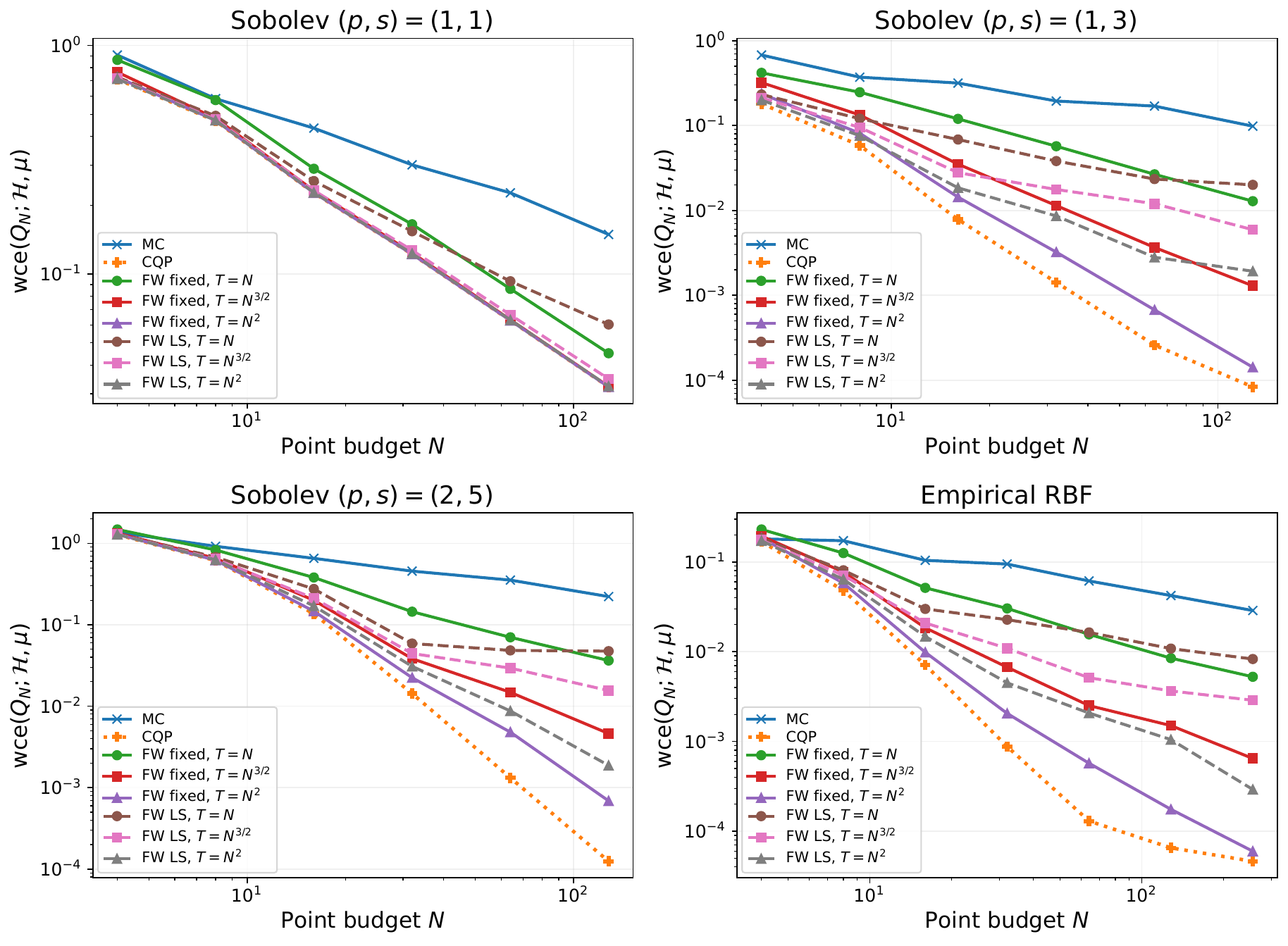}
    \caption{
    Frank--Wolfe budget and step-size ablation.
    We compare $T(N)\in\{N,N^{3/2},N^2\}$ with either the fixed step-size rule $\gamma_t=2/(t+2)$ or exact line search along the current Frank--Wolfe segment.
    Monte Carlo and fixed-pool CQP are included as references.
    }
    \label{fig:fw-ablation}
\end{figure}

Figure~\ref{fig:fw-ablation} studies how the Frank--Wolfe performance depends on the iteration budget and step-size rule.
Increasing the iteration budget improves the fixed-step Frank--Wolfe rule, and $T(N)=N^2$ is consistently the strongest or among the strongest FW choices.
This agrees with Theorem~\ref{thm:kq-fw}, where a sufficiently large number of Frank--Wolfe iterations is needed to make the optimization error negligible relative to the statistical error.

The exact line-search variant is not uniformly better in these finite-budget experiments.
Although line search minimizes the objective along the current Frank--Wolfe
segment, the resulting trajectory is not uniformly better over a finite number
of iterations. 
Empirically, the fixed step-size rule with $T(N)=N^2$ gives the most reliable approximation to the CQP among the tested FW variants.
For this reason, we use fixed-step Frank--Wolfe with $T(N)=N^2$ as the default method in the main experiments.

\subsection{Resample-herding candidate-size ablation}

\begin{figure}[h]
    \centering
    \includegraphics[width=\textwidth]{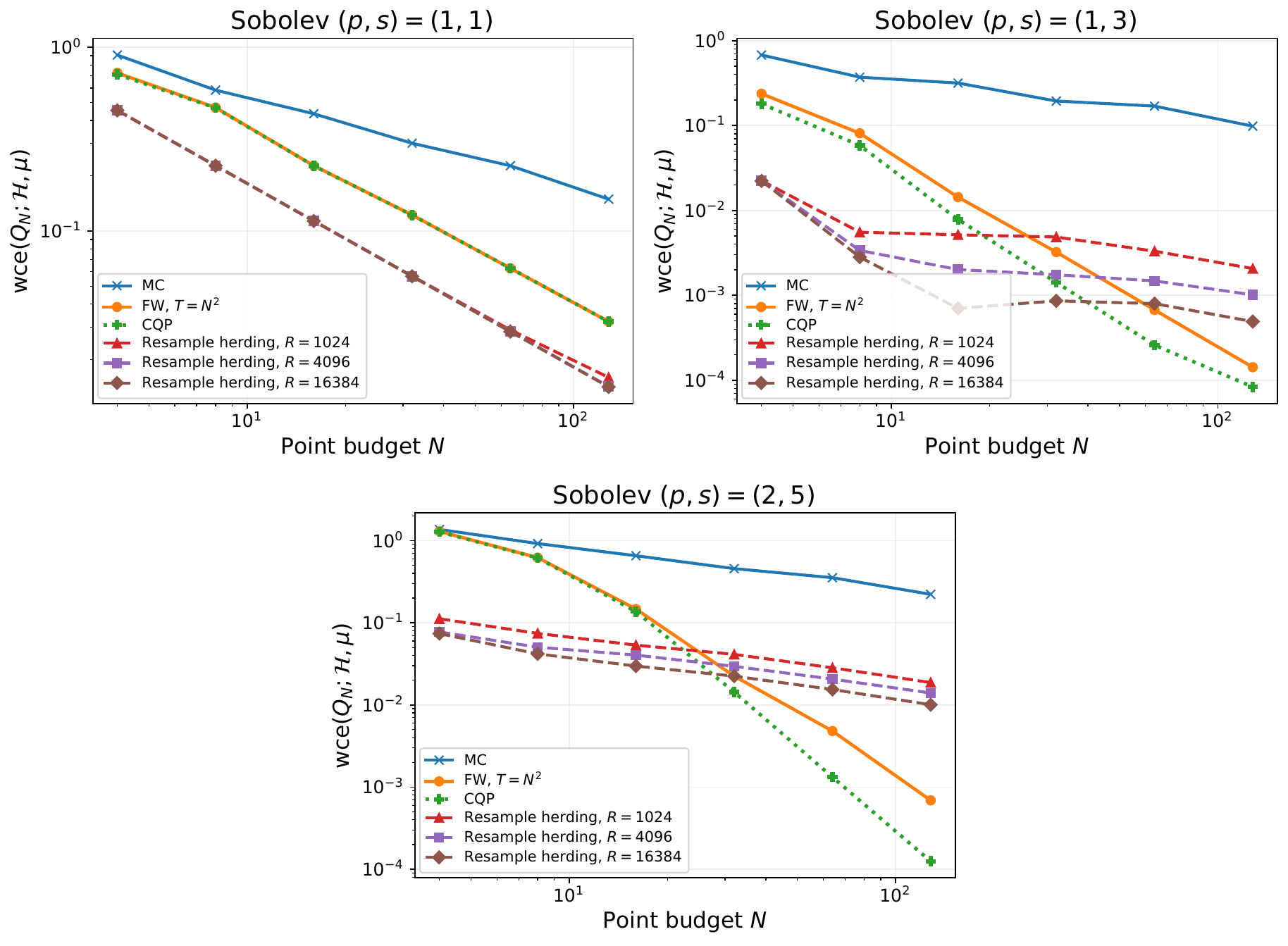}
    \caption{
    Resample-herding candidate-size ablation for the Sobolev experiments.
    We vary the number $R$ of fresh i.i.d.\ candidates searched by resample herding at each iteration, using $R\in\{1024,4096,16384\}$.
    Monte Carlo, fixed-pool Frank--Wolfe with $T(N)=N^2$, and fixed-pool CQP are included as references.
    }
    \label{fig:resample-ablation}
\end{figure}

Figure~\ref{fig:resample-ablation} examines the sensitivity of resample herding to the number $R$ of candidate points used at each iteration.
In the lower-smoothness case $(p,s)=(1,1)$, resample herding is strong and relatively insensitive to the tested candidate sizes.
This is the setting in which herding is most competitive with, and in this experiment even stronger than, the fixed-pool positive methods.

In the smoother Sobolev regimes, however, the behavior changes.
Increasing $R$ improves the resample-herding baseline to some extent, but it does not eliminate the gap to the fixed-pool positive methods at larger budgets.
The effect is especially visible in the $(p,s)=(2,5)$ experiment, where resample herding remains comparatively flat while fixed-pool FW and CQP continue to decrease.
This suggests that, in these regimes, the gap is not explained solely by the
candidate-search budget of resample herding; rather, the reweighting flexibility
over the evaluated pool appears to be an important factor.

\end{document}